\newcommand{\Reff}[1]{Fig. \ref{#1}}
\newcommand{\eps}{\varepsilon}
\newcommand{\D}{\nabla}
\newcommand{\pd}{\partial}
\newcommand{\R}{\mathbb{R}}            
\newcommand{\N}{\mathbb{N}}            
\newcommand{\W}{\mathcal{W}}
\newcommand{\Ad}{\mathcal{H}}
\newcommand{\M}{\mathcal{M}}
\newcommand{\F}{\mathcal{F}}
\newcommand{\Ha}{\mathcal{H}}
\newcommand{\LL}{\mathcal{L}}
\newcommand{\Chi}{\mathcal{X}}
\DeclareMathOperator{\loc}{loc}
\theoremstyle{plain}
\numberwithin{equation}{section}
\newtheorem{lemma}{Lemma}[section]
\newtheorem{theorem}[lemma]{Theorem}
\theoremstyle{definition}
\begin{document}

\title[Approximations of Willmore Flow]
{Colliding Interfaces in Old and New Diffuse-interface Approximations of Willmore-flow}
\author[S. Esedo\=glu, A. R\"atz, M. R\"oger]{Selim Esedo\=glu, Andreas R\"atz,
  Matthias R\"oger}


\subjclass[2000]{49J45,53C44,74S05}

\keywords{Diffuse-interface, Willmore flow, elastica energy,
  topological changes}

\date{\today}
\maketitle

\begin{abstract}
  This paper is concerned with diffuse-interface approximations of the
  Willmore flow. We first present numerical results of standard
  diffuse-interface models for colliding one dimensional
  interfaces. In such a scenario evolutions towards interfaces with
  corners can occur that do not necessarily describe the adequate sharp-interface dynamics. 

  We therefore propose and investigate alternative diffuse-interface approximations that lead to a different and more regular behavior if interfaces collide.
  These dynamics are derived from approximate energies that converge to the $L^1$-lower-semicontinuous envelope of the Willmore energy, which is in general not true for the more standard Willmore approximation.
\end{abstract}

\section{Introduction}

Diffuse-interface approximation of geometric evolution equations has a long history and is widely used in numerical simulations. One advantage of the diffuse-interface approach is that usually an automatic treatment of topological changes in a reasonable manner is guaranteed. It is however not always clear which (generalized) sharp-interface evolution is in fact approximated. We discuss here this issue in the case of the diffuse Willmore flow, in particular in  situations where collisions between different interfaces occur and where these interfaces interact with each other.
In applications such as image processing and computer vision, it is of great interest to compute Willmore flow through such topological events.

In the following we fix a nonempty open set $\Omega \subset \R^n$. Let $\M$
denote the class of open sets $E\subset\Omega $ with $\Gamma=\partial
E\cap\Omega$ given by a finite union of embedded closed
$(n-1)$-dimensional  $C^2$-manifolds without boundary in $\Omega$. We
associate to such $\Gamma$ the inner unit normal field
$\nu:\Gamma\to\R^n$,  the shape operator  $A$  with respect
to $\nu$, and  the principal curvatures $\kappa_1, \dots,
\kappa_{n-1}$ with respect to $\nu$. Finally we define the scalar mean
curvature $H=\kappa_1+\ldots + \kappa_{n-1}$ and the mean curvature vector $\vec{H}=H\nu$. 

The Willmore energy \cite{Willmore93} is then defined as
\begin{equation}
  \label{eq:wf}
  \mathcal{W}(\Gamma) := \frac1{2}\int_\Gamma H^2(x)\; \text{d}
  \mathcal{H}^{n-1}(x).
\end{equation}
The corresponding $L^2$-gradient flow is called Willmore flow. For an evolving family of sets $(E(t))_{t\in(0,T)}$ in $\M$ with boundaries $\Gamma(t)=\partial E(t)\cap \Omega$ the velocity in direction of the inner normal field $\nu(t)$ is given by
\begin{align}
	v(t) \,&=\, \Delta_{\Gamma(t)} {H}(t) - \frac{1}{2}H(t)^3 + H(t)|A(t)|^2, \label{eq:wflow}
\end{align}
on $\Gamma(t)$, where $|A(t)|^2=\kappa_1^2+\dots+\kappa_{n-1}^2$
denotes the squared Frobenius norm of the shape operator $A(t)$ and
$\Delta_{\Gamma(t)}$ denotes the Laplace-Beltrami operator on
$\Gamma(t)$.

In two dimensional space the Willmore functional for curves and the
Willmore flow are better known as Eulers elastica functional and
evolution of elastic curves. In this case \eqref{eq:wflow} reduces to 
\begin{align}
	v(t)\,&=\, \Delta \kappa(t) +\frac{1}{2}\kappa(t)^3,
\end{align}
where $\kappa(t)$ denotes the curvature of $\Gamma(t)$. The Willmore flow of a single curve in the plane exists for all times \cite{DzKS02} and converges for fixed curve length to an elastica.

A well-known and widely used diffuse-interface approximation of the Willmore energy is motivated by a conjecture of De Giorgi \cite{Gior91} and is in a modified form given by
\begin{align}
	\W_\eps(u) \,&:=\, \int_{\R^n} \frac{1}{2\eps} \Big(-\eps\Delta u + \frac{1}{\eps}W'(u)\Big)^2\,\text{d}\LL^n, \label{eq:wd}
\end{align}
where $W'$ is the derivative of a suitable double-well potential $W$ and $u$ is a smooth function on $\R^n$. The corresponding formal approximation of the Willmore flow is then given by the $L^2(\Omega)$-gradient flow of $\W_\eps$,
\begin{align}
  \eps \partial_t u \,&=\, \Delta \big(-\eps\Delta u+
  \frac{1}{\eps}W'(u)\big) - \frac{1}{\eps^2}W''(u)\big(-\eps\Delta u+
  \frac{1}{\eps}W'(u)\big), \label{eq:Wf-d}
\end{align}
complemented by suitable boundary conditions for $u$ on
$\partial\Omega$ and an initial condition for $u$ in $\Omega$.

If contact and collision of (sharp) interfaces are possible it is a priori not clear how to continue the Willmore flow. This situation already occurs for the evolution of several curves in the plane. In many applications interactions of different curves should be considered and treating the evolution of each curve separately might not be appropriate. To account for touching and colliding curves in such situations a suitable generalization of the evolution beyond the smooth embedded case is required. One possible extension is a gradient dynamic with respect to a suitably relaxed Willmore functional for general sets. A natural candidate for such relaxation is the $L^1(\Omega)$-lower-semicontinuous envelope
\begin{gather}
	\tilde\W(E) \,:=\, \inf \Big\{\liminf_{k\to\infty} \W(\partial E_k) \,:\, E_k\,{\overset{k\to\infty}{\longrightarrow}}\, E\text{ in }L^1(\Omega), E_k \in \M\text{ for all }k\in\N\Big\} \label{eq:genE}.
\end{gather}
It is however difficult to characterize and numerically approximate the corresponding gradient flow. Instead we consider here diffuse-interface approximations that naturally allow for collision of (diffuse) interfaces and exist globally in time. 

Our first observation is that the usual diffuse approximation \eqref{eq:wd} does not approximate any gradient flow with respect to the generalized elastic energy \eqref{eq:genE}. In fact, careful numerical simulations show that the diffuse evolution in the plane can lead to transversal intersections of interfaces, which is known to have infinite energy with respect to the $L^1(\R^2)$-lower-semicontinuous envelope of the elastica functional \cite{BeMu04}. The occurrence of intersections is related to the existence of saddle solutions of the Allen-Cahn equation, as we will explain below. This example also demonstrates that already on the level of energies a discrepancy between the diffuse elastica energy and the relaxation \eqref{eq:genE} of the sharp-interface energy occurs. In particular, $\W_\eps$ does not Gamma-converge to $\tilde\W$ \cite{Mu10}.

We will next discuss two diffuse-interface approximations of the Willmore flow that Gamma-converge to $\tilde\W$. The first one was proposed by Bellettini \cite{Bell97}. As an alternative we introduce a modification of the usual diffuse Willmore functional that adds a penalty term to $\W_\eps$. This penalty term is very small as long as no collisions of interfaces occur, allows for the touching of diffuse interfaces, but prevents transversal intersections. For both approaches we consider the $L^2$-gradient flows and present numerical simulations. 

\section{Diffuse approximation of the Willmore functional and diffuse Willmore flows}
\label{sec:intro-willmore}
Diffuse-interface approximations of the Willmore energy
\eqref{eq:wf} are usually based on a Ginzburg--Landau free energy for a
phase-field variable $u$
\begin{equation}
  \label{eq:gl}
  \Ad_\eps(u) := \int_{\Omega}\left( \frac{\eps}{2} |\D u|^2 +
    \eps^{-1} W(u)\right)  \;\text{d}x,
\end{equation}
where $W$ denotes a suitable double-well potential that we choose in the following as $W(u) = 18 u^2(1-u)^2$. For a smooth phase field $u:\Omega\to\R$ let us further define the $L^2$-gradient of $\Ad_\eps$,
\begin{equation*}
  w := \frac{\delta \mathcal{H}_\eps}{\delta u} = -\eps \Delta u +
  \eps^{-1}W'(u),
\end{equation*}
the diffuse normal vector
\begin{align*}
	\nu(x)\,:=\, \begin{cases}
		\frac{\nabla u}{|\nabla u|} &\text{ if } \nabla u\neq 0,\\
		0 &\text{ else,}
		\end{cases}
\end{align*}
and the \emph{level set mean curvature}
\begin{align}
	v(x) \,:=\, \nabla \cdot \nu(x). \label{eq:levelH}
\end{align}
For phase fields $u$ with `moderate' energy $\Ad_\eps(u)$ the function $u$ will look like a smoothened indicator function that is close to the values $0,1$ in a large part of the domain and possibly with thin transition layers. The width of these \emph{diffuse interfaces} is proportional to $\eps>0$. The
energy $\Ad_\eps$ is a diffuse-interface counterpart of the
surface area functional. This statement was made precise by Modica and Mortola, who proved the Gamma-convergence with respect to $L^1$ of $\Ad_\eps$ to the perimeter functional \cite{MoMo77,Modi87}. The $L^2$-gradient $w$ of $\Ad_\eps$,
describes a kind of \emph{diffuse mean curvature} and motivates the definition of the diffuse Willmore functional \eqref{eq:wd}. The approximation $\W_\eps$ has been studied intensively and is widely used in numerical simulations \cite{LoMa00,BeMu05,DuLiWa04,LoRaVo09}. For space dimension $n=2,3$ it is known \cite{RoeS06} that (for uniformly bounded $\Ad_\eps$) the functionals $\W_\eps$ Gamma-converge towards the Willmore functional in limit points $E \subset \Omega$ with $C^2$-boundary in $\Omega$. The Gamma-convergence is not true in general limit points $E\subset\R^n$. In fact \cite{DaFP92} showed the existence of a smooth function $u:\R^2\to (-1,1)$ (where $\pm 1$ are the zeros of the double well potential) with the following properties: $u$ is a saddle solution of the Allen-Cahn equation
\begin{gather*}	
	- \Delta u +W'(u) \,=\, 0\quad\text{ in }\R^2,
\end{gather*}
$u=0$ holds on the coordinate axes $\{(x_1,x_2)\,:\, x_1x_2=0\}$, and $u$ is positive inside the first and third quadrant of $\R^2$ and negative inside the second and fourth quadrant. Moreover, there exists $k>0$ such that for $|x|,|y|>k$ the saddle solution $u$ is exponentially close to $\pm 1$ and $|\nabla u|$ is exponentially small. In particular, the rescaled functions $u_\eps:\R^2\to\R$, $u_\eps(x):=u(\frac{1}{\eps}x)$ have on compact subsets of $\R^2$ uniformly bounded energy $\Ad_\eps$, converge in $L^1_{\loc}(\R^2)$ to the $\pm1$-characteristic function of the set $E:=\{(x_1,x_2): x_1x_2>0\}$, and finally satisfy $\W_\eps(u_\eps)=0$ for all $\eps>0$. On the other hand, we have by \cite{BeMu04} that $\tilde{\W}(E)=\infty$ for the  $L^1_{\loc}(\R^2)$-lower-semicontinuous envelope of the Willmore functional. Therefore 
\begin{gather*}
	\infty\,=\, \tilde{\W}(E) \,>\, \liminf_{\eps\to 0} W_\eps(u_\eps)\,=\,0
\end{gather*}
which shows that $\W_\eps$ cannot Gamma-converge to $\tilde{\W}$.

For a tighter convergence of approximations some control on the Willmore energy of level sets of the phase fields is necessary. Bellettini \cite{Bell97} proposed such type of approximations for general geometric functionals. The corresponding approximation of the Willmore functional is given by the square integral of the mean curvature of the level sets of the phase field $u$ integrated with respect to the diffuse area density,
\begin{equation}
  \hat{\mathcal{W}}_\eps(u) := \frac1{2}\int_{\Omega\setminus \{|\nabla u|=0\}} \Big(\nabla \cdot
  \frac{\nabla u}{|\nabla u|}\Big)^2 \Big(\frac{\eps}{2} |\nabla u|^2 +
  \eps^{-1}W(u)\Big)\;\text{d}x. \label{eq:bell}
\end{equation}
In the particular case of \eqref{eq:bell} Bellettinis results imply that (for uniformly bounded diffuse surface area $\Ad_\eps$) the functionals $\hat\W_\eps$ in fact Gamma-converge with respect to $L^1(\Omega)$ to $\tilde\W$.

For $n = 2$ an alternative approximation of the elastica functional has been investigated by Mugnai \cite{Mu10}. He uses an approximation of the square integral of the second fundamental form,
\begin{equation}
  \label{eq:wplusadM}
  \bar{\mathcal{W}}_\eps(u) := \frac1{2 \eps}\int_\Omega\left|\eps D^2
  u - \eps^{-1}W'(u)\frac{\nabla u}{|\nabla u|} \otimes \frac{\nabla
    u}{|\nabla u|} \right|^2 \;\text{d}x.
\end{equation}
and obtains for $n=2$ the $L^1(\Omega)$-Gamma-convergence  of
$\bar{\mathcal{W}}_\eps $ to $\tilde{\mathcal{W}}$, again under a uniform bound on the diffuse surface area.


\section{A new diffuse-interface approximation of the Willmore
  functional}
\label{sec:diff_new}
We propose here a modification of the `standard' approximation $\W_\eps$ of the Willmore functional by an additional penalty term. This has some advantages in numerical simulations as we discuss below. The example of saddle solutions for the Allen-Cahn equation and a comparison with $\hat\W_\eps, \bar\W_\eps$ reveals that the standard approximation works well as long as phase fields $u$ behave like the optimal profile $q$ for the one-dimensional transition from $0$ to $1$ given by
\begin{align}
	-q'' + W(q) \,&=\, 0, \quad q(0)\,=\,\frac1{2},\quad \lim_{r\to -\infty} q(r) \,=\,0,\quad \lim_{r\to -\infty} q(r) \,=\,1. \label{eq:opt-prof}
\end{align}
Formal asymptotic expansions often use that $u_\eps(x)\approx
q(\frac{d}{\eps})$, where $d$ denotes the signed distance functions to
a limit hypersurface. This property can be formalized as  vanishing of
the \emph{discrepancy} 
\[
\zeta_\eps:= \frac{\eps}{2}|\nabla u|^2-\eps^{-1}W(u)
\]
 that measures deviation from equi-distribution in the diffuse surface energy. In the limit $\eps\to 0$ this quantity vanishes in $L^1(\Omega)$ for sequences of phase fields that have uniformly bounded diffuse surface energy and diffuse Willmore energy $\W_\eps$ \cite{RoeS06}.  This weak control does however not exclude formation of transversal intersections in the limit. Rewriting the diffuse mean curvature in terms of the level set mean curvature $v:= \nabla \cdot \frac{\nabla u}{|\nabla u|}$ as
\begin{align*}
	w\,=\, -\eps\Delta u +\frac{1}{\eps}W'(u) \,=\, -\eps |\nabla u|v -\nabla\zeta_\eps\cdot \frac{\nabla u}{|\nabla u|^2}\qquad\text{ on }\{\nabla u\neq0\}
\end{align*}
we see that the diffuse mean curvature controls the level set mean curvature if and only if the normal projection of the gradient of the discrepancy is small. 

This motivates to introduce a penalty term of the form
\begin{equation}
  \label{eq:energyAdd}
  \mathcal{A}_\eps(u) :=
  \frac1{2\eps^{1+\alpha}}\int_\Omega \left(w + \left(\eps|\nabla u|\sqrt{2W(u)}\right)^{\frac{1}{2}}v \right)^2 \;\text{d}x,
\end{equation}
where $0\leq \alpha \le 1$. If $u\approx q(\frac{d}{\eps})$ we find that  $\mathcal{A}_\eps(u) = \mathcal{O}(\eps)$ remains small. The energy however becomes large if $u$ deviates from the optimal profile structure. We then define the \emph{modified diffuse Willmore energy }
\begin{equation}
  \mathcal{F}_\eps(u) \,:=\, \mathcal{W}_\eps(u) + \mathcal{A}_\eps(u). \label{eq:def-mod-W}
\end{equation}
Using Bellettinis result \cite{Bell97} we can prove the Gamma-convergence of the modified Willmore energy $\F_\eps$ to $\tilde{\W}$. In the following we extend the functionals $\Ad_\eps$ and $\F_\eps$ to $L^1(\Omega)$ by setting them to $+\infty$ on $L^1(\Omega)\setminus C^2(\Omega)$. 
\begin{theorem}\label{thm:conv}
Let $\alpha>0$. Then the functional $\F_\eps $ Gamma-converges with respect to $L^1(\Omega)$ to $\tilde{W}$ in the following sense:
\begin{enumerate}
\item\label{it:one}	Let $(u_\eps)_{\eps>0}$ be a sequence of smooth phase fields $u_\eps:\Omega\to \R$ with
\begin{align}
	\sup_{\eps>0} \Ad_\eps(u_\eps) \,<\, \infty \label{eq:bound}
\end{align}
and $u_\eps\,\to\, u$ in $L^1(\Omega)$. Then $u\in BV(\Omega;\{0,1\})$, and
\begin{align}
	\tilde\W(E) \,\leq\, \liminf_{\eps\to 0} \F_\eps(u_\eps), \label{eq:liminf}
\end{align}
where $E=\{u=1\}$.\\
\item	Let $E\subset \Omega$ be given with $\tilde\W(E)<\infty$. Then there exists a sequence $(u_\eps)_{\eps>0}$ of smooth phase fields in $\Omega$ such that $u_\eps\to \Chi_E$ in $L^1(\Omega)$ and
\begin{align}
	\tilde\W(E) \,\geq\, \limsup_{\eps\to 0} \F_\eps(u_\eps). \label{eq:limsup}
\end{align}
\end{enumerate}
In the situation of \eqref{it:one} for $\alpha=0$ we still obtain
\begin{align}
	\frac{1}{2}\tilde\W(E) \,\leq\, \liminf_{\eps\to 0} \F_\eps(u_\eps) \label{eq:liminf2}
\end{align}
and in particular $\tilde{\W}(E)<\infty$ if the right-hand side is finite.
\end{theorem}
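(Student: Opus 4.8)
The plan is to deduce both inequalities from Bellettini's $\Gamma$-liminf estimate $\tilde\W(E)\le\liminf_{\eps\to0}\hat\W_\eps(u_\eps)$, which I may assume, by comparing the modified energy $\F_\eps$ with Bellettini's $\hat\W_\eps$. First, the bound \eqref{eq:bound} together with the Modica--Mortola compactness theorem yields (along a subsequence) $u\in BV(\Omega;\{0,1\})$ and $E=\{u=1\}$. Since otherwise \eqref{eq:liminf} is vacuous, I may assume $\liminf_{\eps\to0}\F_\eps(u_\eps)=:L<\infty$ and pass to a subsequence with $\F_\eps(u_\eps)\to L$, so that both $\W_\eps(u_\eps)\le\F_\eps(u_\eps)$ and $\mathcal{A}_\eps(u_\eps)$ stay bounded.

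Write $w=-\eps\Delta u+\eps^{-1}W'(u)$, $v=\D\cdot\frac{\D u}{|\D u|}$ and $\rho=\frac\eps2|\D u|^2+\eps^{-1}W(u)$, and set $c:=(\eps|\D u|\sqrt{2W(u)})^{1/2}$, so that $c$ is exactly the coefficient of $v$ in $\mathcal{A}_\eps$. The key algebraic step is to decompose the Bellettini density into its geometric-mean part and a discrepancy part, $\rho=\eps^{-1}c^2+(p-q)^2$ with $p=\sqrt{\eps/2}\,|\D u|$ and $q=\sqrt{\eps^{-1}W(u)}$, which splits Bellettini's functional as
\begin{equation*}
\hat\W_\eps(u)=\frac1{2\eps}\int_\Omega (cv)^2\,\text{d}x+\frac12\int_\Omega v^2(p-q)^2\,\text{d}x.
\end{equation*}
For the first (principal) term I use $cv=-w+(w+cv)$ and Young's inequality to obtain, for every $\delta>0$,
\begin{equation*}
\frac1{2\eps}\int_\Omega(cv)^2\,\text{d}x\;\le\;(1+\delta)\,\W_\eps(u)+(1+\delta^{-1})\,\eps^{\alpha}\mathcal{A}_\eps(u).
\end{equation*}
For $\alpha>0$ the last term tends to $0$ along the subsequence; letting $\eps\to0$ and then $\delta\to0$ bounds the $\limsup$ of the left-hand side by $L$. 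For $\alpha=0$ I instead take $\delta=1$, which gives the crude bound $\frac1{2\eps}\int(cv)^2\,\text{d}x\le 2\W_\eps+2\mathcal{A}_\eps=2\F_\eps$ and is the source of the factor $\tfrac12$ in \eqref{eq:liminf2}.

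The main obstacle is to show that the discrepancy term $\frac12\int_\Omega v^2(p-q)^2\,\text{d}x$ vanishes in the limit. Here I would use the identity $w=-\eps|\D u|v-\D\zeta_\eps\cdot\frac{\D u}{|\D u|^2}$ recorded before the theorem to write $(c-\eps|\D u|)v=(w+cv)+\D\zeta_\eps\cdot\frac{\D u}{|\D u|^2}$ and to express $v^2(p-q)^2$ through $[(c-\eps|\D u|)v]^2$. The contribution of $w+cv$ is again controlled by $\eps^{\alpha}\mathcal{A}_\eps\to0$, so that everything reduces to controlling the normal projection $\D\zeta_\eps\cdot\frac{\D u}{|\D u|^2}$ of the discrepancy gradient. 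This is precisely the quantity that $\mathcal{A}_\eps$ was designed to penalise, and its smallness in the relevant weighted sense is the genuinely delicate point: I expect to combine the penalty bound with the discrepancy estimates available for sequences of uniformly bounded diffuse area and diffuse Willmore energy \cite{RoeS06}. Once $\frac12\int v^2(p-q)^2\,\text{d}x\to0$ is established, the two displays above give $\liminf_{\eps\to0}\hat\W_\eps(u_\eps)\le L$ (resp.\ $\le 2L$), and Bellettini's estimate yields \eqref{eq:liminf} (resp.\ \eqref{eq:liminf2}).

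For the recovery sequence (part 2) I would argue as follows. Given $E$ with $\tilde\W(E)<\infty$, the definition \eqref{eq:genE} provides sets $E_k\in\M$ with $E_k\to E$ in $L^1(\Omega)$ and $\W(\partial E_k)\to\tilde\W(E)$. For each fixed smooth $E_k$ I take the standard optimal-profile phase field $u_\eps^k(x)=q(d_{E_k}(x)/\eps)$ built from the one-dimensional profile \eqref{eq:opt-prof} and the signed distance $d_{E_k}$ to $\partial E_k$; then $\Ad_\eps(u_\eps^k)\to\mathrm{Per}_\Omega(E_k)$ and $\W_\eps(u_\eps^k)\to\W(\partial E_k)$ by the known asymptotics, while $\mathcal{A}_\eps(u_\eps^k)=\mathcal{O}(\eps)\to0$ because $u_\eps^k$ is, to leading order, an optimal profile for which $w+cv$ vanishes. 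Hence $\F_\eps(u_\eps^k)\to\W(\partial E_k)$, and a diagonal choice $\eps\mapsto k(\eps)$ produces $u_\eps\to\Chi_E$ in $L^1(\Omega)$ with $\limsup_{\eps\to0}\F_\eps(u_\eps)\le\tilde\W(E)$, which is \eqref{eq:limsup}.
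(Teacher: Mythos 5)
Your Part 2 (the recovery sequence) is essentially the paper's argument: optimal profile for smooth sets, $\mathcal{A}_\eps$ vanishing identically in the profile region and exponentially small in the interpolation layer, then a diagonal argument over an approximating sequence $E_k\in\M$. That part is fine.

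Part 1, however, has a genuine gap, and it comes from an unnecessary detour. You insist on reconstructing the \emph{full} Bellettini functional $\hat\W_\eps$ so as to invoke his $\Gamma$-liminf statement as a black box, and this forces you to show that the discrepancy piece $\tfrac12\int_\Omega v^2(p-q)^2\,\text{d}x$ vanishes along the subsequence. You acknowledge this is ``the genuinely delicate point'' and only sketch a hope of combining the penalty with the discrepancy estimates of \cite{RoeS06}. But those estimates give $\zeta_\eps=p^2-q^2\to 0$ in $L^1(\Omega)$ only; here you need $\int v^2\,\zeta_\eps^2/(p+q)^2\to 0$, i.e.\ a bound with the squared level-set curvature $v^2$ as a weight, and $v$ is exactly the quantity that degenerates (and can blow up) where $|\nabla u_\eps|$ is small. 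Neither the penalty $\mathcal{A}_\eps$ (which controls $(w+cv)^2$, not $v^2$ times the discrepancy) nor \cite{RoeS06} delivers such a weighted estimate, so as written this step does not close.

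The paper's proof shows the detour is avoidable. The same Young-type inequality you use for the ``principal'' term already yields, with $K=\eps^{-\alpha}$,
\begin{align*}
	\F_\eps(u_\eps) \,\geq\, \frac{\eps^{-\alpha}}{2+2\eps^{-\alpha}} \int_\Omega |\nabla u_\eps|\sqrt{2W(u_\eps)}\,v^2\,\text{d}x ,
\end{align*}
and the geometric-mean density $|\nabla u_\eps|\sqrt{2W(u_\eps)}$ is exactly what the co-area formula wants: the right-hand side equals $\frac{\eps^{-\alpha}}{2+2\eps^{-\alpha}}\int_0^1\sqrt{2W(s)}\int_{\{u_\eps=s\}}v^2\,\text{d}\Ha^{n-1}\text{d}s$. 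One then follows the \emph{proof} (not the statement) of Bellettini's Theorem 4.2: for a.e.\ level $s$ the superlevel sets converge to $E$ and are admissible competitors in the definition of $\tilde\W$, so $\tilde\W(E)\le\liminf_k\W(\partial\{u_{\eps_k}>s\})$, and Fatou's lemma in $s$ together with the normalization $\int_0^1\sqrt{2W(s)}\,\text{d}s=1$ (for $W=18u^2(1-u)^2$) gives \eqref{eq:liminf}; the prefactor $\frac{\eps^{-\alpha}}{2+2\eps^{-\alpha}}\to\frac12$ for $\alpha>0$ and equals $\frac14$ for $\alpha=0$, which is where \eqref{eq:liminf2} comes from. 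The discrepancy term $\int v^2(p-q)^2$ never needs to be estimated. You should either adopt this route or supply a genuine proof of the weighted discrepancy decay; at present that claim is unsupported.
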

\begin{proof}
\begin{enumerate}
\item
By Young's inequality we deduce for any $K>0$ the following estimate.
\begin{align}
	&w^2 + K \left(w + \left(\eps|\nabla u_{\eps}|\sqrt{2W(u_{\eps})}\right)^{\frac{1}{2}}v \right)^2 \notag\\
	=\,& (1+K) w^2 +2Kw   \left(\eps|\nabla u_{\eps}|\sqrt{2W(u_{\eps})}\right)^{\frac{1}{2}}v + K\eps |\nabla u_{\eps}|\sqrt{2W(u_{\eps})}v^2 \notag\\
	\geq\,& \eps\frac{K}{1+K} |\nabla u_{\eps}|\sqrt{2W(u_{\eps})}v^2. \label{eq:young}
\end{align}
This implies
\begin{align}
	\F_\eps(u_{\eps}) \,&\geq\, \frac{\eps^{-\alpha}}{2+2\eps^{-\alpha}} \int_\Omega |\nabla u_{\eps}|\sqrt{2W(u_{\eps})}v^2\,\text{d}x \notag\\
	&\geq \frac{\eps^{-\alpha}}{2+2\eps^{-\alpha}} \int_0^1 \sqrt{2W(s)} \int_{\{u_{\eps}=s\}\cap \{\nabla u_{\eps}\neq 0\}} v^2(x)\,\text{d}\Ha^{n-1}(x)\,\text{d}s, \label{eq:est-bell}
\end{align}
where we have used the co-area formula in the last line.\\
For $(u_\eps)_{\eps>0}$ with $u_\eps\to u$ in $L^1(\Omega)$ we first have by the Modica--Mortola Theorem that $u\in BV(\Omega;\{0,1\})$ with
\begin{align*}
	\int_\Omega |\nabla u| \,\leq\, \liminf_{\eps\to 0} \Ad_\eps(u_\eps).
\end{align*}
After passing to a subsequence $(\eps_k)_{k\in\N}$ we may assume 
\begin{align*}
	\lim_{k\to\infty} \F_{\eps_k}(u_{\eps_k})\,&=\, \liminf_{\eps\to 0} \F_\eps(u_\eps).
\end{align*}
By \eqref{eq:est-bell} we now can follow the proof of \cite[Thm. 4.2]{Bell97}. First one obtains a subsequence $k\to\infty$ (not relabeled) and a set $I\subset (0,1)$ with full measure such that for any $s\in I$
\begin{gather*}
	\{u_{\eps_k}=s\}\,=\,\partial\{u_{\eps_k}>s\},\\
	\{u_{\eps_k}=s\}\cap \{\nabla u_{\eps_k}= 0\}\,=\,\emptyset,\\
	 \Chi_{\{u_{\eps_k}>s\}}\,\to\, \Chi_{\{u>s\}}=\Chi_E\text{ as }k\to\infty,
\end{gather*}
where $E=\{u=1\}$.
Moreover by the definition of $\tilde\W$ we have
\begin{align*}
	\tilde\W(E) \,&\leq\, \liminf_{k\to\infty} \W(\Chi_{\{u_{\eps_k}>s\}})\,=\, \liminf_{k\to\infty} \int_{\{u_{\eps_k}=s\}} v^2\,\text{d}\Ha^{n-1}
\end{align*}
for any $s\in I$. By \eqref{eq:est-bell} and Fatou's Lemma for $\alpha>0$ we eventually obtain
\begin{align*}
	\liminf_{\eps\to 0} \F_\eps(u_\eps) \,&\geq
        \tilde{\W}(u)\int_0^1 \sqrt{2W(s)} \,\text{d}s \,=\,  \tilde{\W}(u).
\end{align*}
In the case $\alpha=0$ the same argument shows \eqref{eq:liminf2}.
\item
Let first $E\subset\Omega$ have smooth boundary. We follow the standard construction of a recovery sequence and  consider the one-dimensional optimal profile $q$ from \eqref{eq:opt-prof} and the signed distance function $d$ from $\partial E$. We then set $u_\eps := q(\frac{d}{\eps})$ in $\{|d|<\delta\}$ where $\delta>0$ is suitably small such that the projection on $\partial E$ is smooth on $\{|d|<2\delta\}$. In $\{|d|>2\delta\}$ we set $u_\eps$ to $1$ in $E$ and $0$ in $\Omega\setminus E$. In $\{\delta<|d|<2\delta\}$ we choose $u_\eps$ to smoothly interpolate in such a way that $\W(u_\eps)$ and $|\nabla u_\eps|$ are exponentially small in $\{|d|\geq \delta\}$, see for example \cite[Section 4]{DoMR11}. Then the Willmore energy $\W_\eps(u_\eps)$ is known to converge to $\W(E)$. For the additional part in the energy we compute in the set $\{|d|<\delta\}$
\begin{align*}
	w  + \left(\eps|\nabla u|\sqrt{2W(u)}\right)^{\frac{1}{2}}v \,&=\,  -q'(\frac{d}{\eps})\Delta d + (\sqrt{2W(q(\frac{d}{\eps}))}q'(\frac{d}{\eps}))^\frac{1}{2}v\,
	=\, 0,
\end{align*}
since in $\{|d|<\delta\}$ we have $\sqrt{2W(q)}=q'$ and $v=\Delta d$ as level sets of $u_\eps$ correspond in that region to level sets of $d$.

In the region $\{|d|\geq 2\delta\}$ we have $w,\eps|\nabla u|\sqrt{2W(u)}=0$.  Finally, for a carefully chosen interpolate in the construction of $u_\eps$, in $\{\delta\leq |d|\leq 2\delta\}$ we have that $w$ and $\eps|\nabla u|\sqrt{2W(u)}$ are exponentially small. Furthermore, $\Delta d =v$ is controlled in terms of the principal curvatures of $\partial E$. This shows the approximation property for $C^2$-boundaries $\partial E$.

To deal with the general case we only need to show that $\tilde{\W}=\W$ on sets with $C^2$-boundaries. This condition is in fact satisfied by \cite{Sch09} and \cite{Menn08a}.     
\end{enumerate}
\hfill
\end{proof}

\section{Numerical simulations for the standard diffuse Willmore flow}
Numerical investigations of the Willmore flow and related dynamics mainly build on parametric (sharp-interface) approaches and implicit
treatments by level set and phase-field methods. Parametric approaches
for the Willmore flow have been proposed in \cite{BaGaNu07,BaGaNu10} for
curves and in \cite{Ru05,DeDzEl05,BaGaNu08} for curves and
surfaces. Generalized Helfrich--type flows for single- and
multicomponent vesicles have been studied with sharp-interface methods
in \cite{BoNoPa10} and \cite{ElSt10}, respectively. Level set methods
have been applied first in \cite{DrRu04} to the Willmore flow. For a
comparison of level set and sharp-interface approaches we refer to
\cite{BeMiObSe09}. Phase field approximations for the Willmore flow have
been numerically investigated in \cite{DuLiWa04}. For diffuse-interface
approximations of Helfrich-type flows, we refer to
\cite{BiKaMi05,DuLiWa06,CaHeMa06}. Coupled
Helfrich- and Cahn-Hilliard-type flows have been numerically
treated with phase-field models in \cite{WaDu08,LoRaVo09}.

In this section, we consider the standard diffuse Willmore flow \eqref{eq:wd} and focus on situations where (diffuse) interfaces collide. We present numerical results for both a finite element
discretization and a finite difference scheme of the diffuse-interface flow
\eqref{eq:Wf-d}. 

\subsection{Finite element approximation}
The discretization is implemented in
the FEM toolbox AMDiS \cite{amdis}. First, we use linear elements in
space and a semi-implicit time discretization with a linearization of
nonlinearities. Furthermore, we use a uniform grid and discretize
\eqref{eq:Wf-d} as a coupled system of two second order
PDE's for the discrete solutions $u_h$ and $w_h$ and solve the
resulting linear system with a direct solver (UMFPACK,
\cite{umfpack}). Thereby, we consider a
domain $\Omega = (-1,1)^2 \subset \R^2$ and assume periodic solutions
at the boundary $\partial \Omega$. Moreover, we use a simple adaptive
strategy in time, where time steps $\Delta t_{m} \in [10^{-7},5 \cdot
10^{-6}]$ are inversely proportional to the maximum of the discrete
time derivative of the phase-field variable. For the results presented
in this section we have used $\varepsilon = 0.1$.

\subsubsection{Symmetric initial condition}
\label{sec:symmetricinitialcondition}
As initial condition, we have chosen a phase-field function
$u_h(\cdot,0):\overline \Omega \to \R$ having nine symmetrically
distributed circular levelsets $\{u_h(\cdot, 0) = 1/2\}$ (\Reff{fig1},
left) with equal radii $0.1$. In \Reff{fig1}, one can see the contour
plots of $u_h$ at different times. Thereby discs start to grow until
the interfaces begin to ``feel'' each other. Then the interface forms
sharp corners. Our interpretation of such behavior is that the diffuse approximations converge to the saddle solution of the Allen--Cahn equation, discussed in Section \ref{sec:intro-willmore}. As the diffuse mean curvature for saddle solutions vanishes, such corners carry very little diffuse Willmore energy.
\begin{figure}[here]
\begin{center}
\includegraphics*[width=0.24\textwidth]{./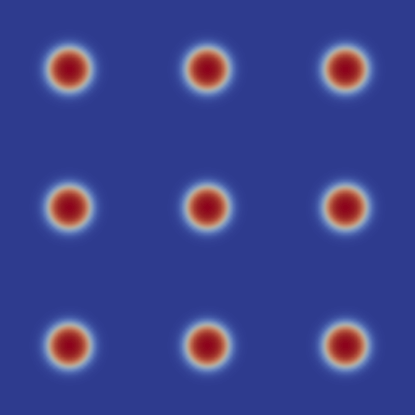}
\includegraphics*[width=0.24\textwidth]{./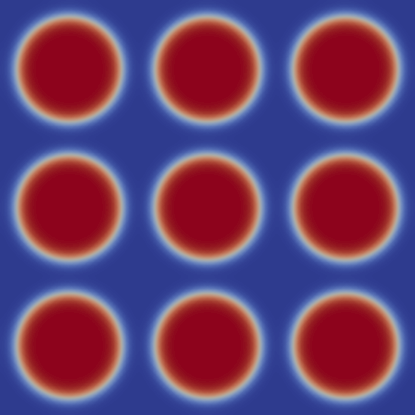}
\includegraphics*[width=0.24\textwidth]{./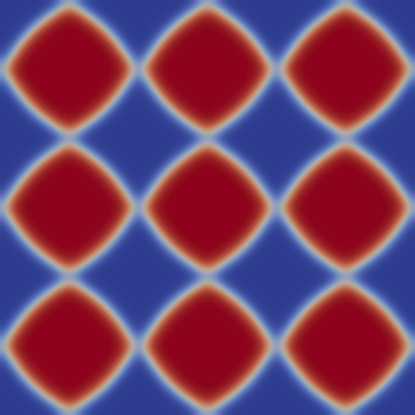}
\includegraphics*[width=0.24\textwidth]{./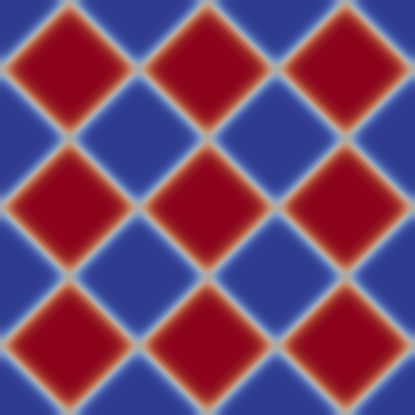}
\caption{\footnotesize \label{fig1} Evolution of ``standard'' diffuse-interface
  Willmore flow \eqref{eq:Wf-d}: Discrete phase-field $u_h$ for different times
  $t=0$, $t\approx0.0019$, $t\approx0.0024$, $t\approx0.0037$.}
\end{center}
\end{figure}

\subsubsection{Non-symmetric initial condition}

In \Reff{fig2}, similar phenomena can be observed for non symmetric
initial conditions. The discrete Willmore energy is plotted in
\Reff{fig3}. For large times $t$ this energy 
\begin{equation*}
  e(t) := \mathcal{W}_\eps(u_h(\cdot , t))
\end{equation*}
is close to $0$.

\begin{figure}[here]
\begin{center}
\includegraphics*[width=0.24\textwidth]{./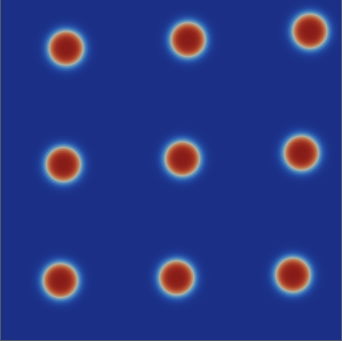}
\includegraphics*[width=0.24\textwidth]{./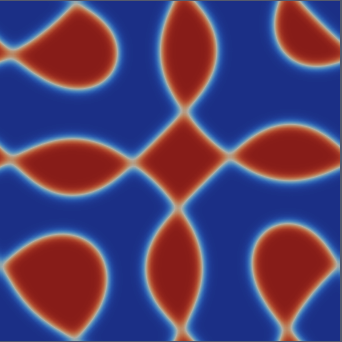}
\includegraphics*[width=0.24\textwidth]{./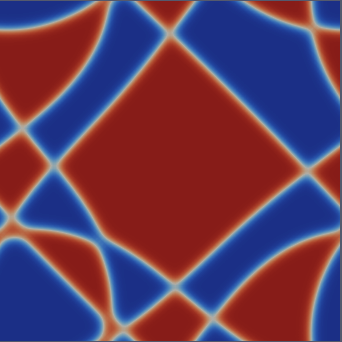}
\includegraphics*[width=0.24\textwidth]{./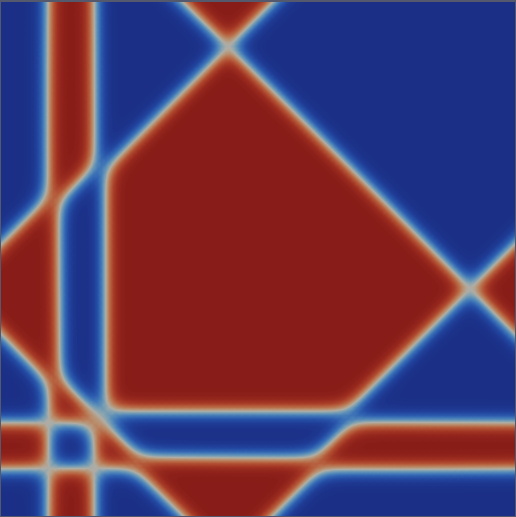}
\caption{\footnotesize \label{fig2} Evolution of ``standard'' diffuse-interface
  Willmore flow \eqref{eq:Wf-d}: Discrete phase-field $u_h$ for different times
  $t=0$, $t\approx0.0039$, \mbox{$t\approx0.1038$,} $t\approx0.3338$.}
\end{center}
\end{figure}

\begin{figure}[here]
\begin{center}
\includegraphics*[width=0.5\textwidth]{./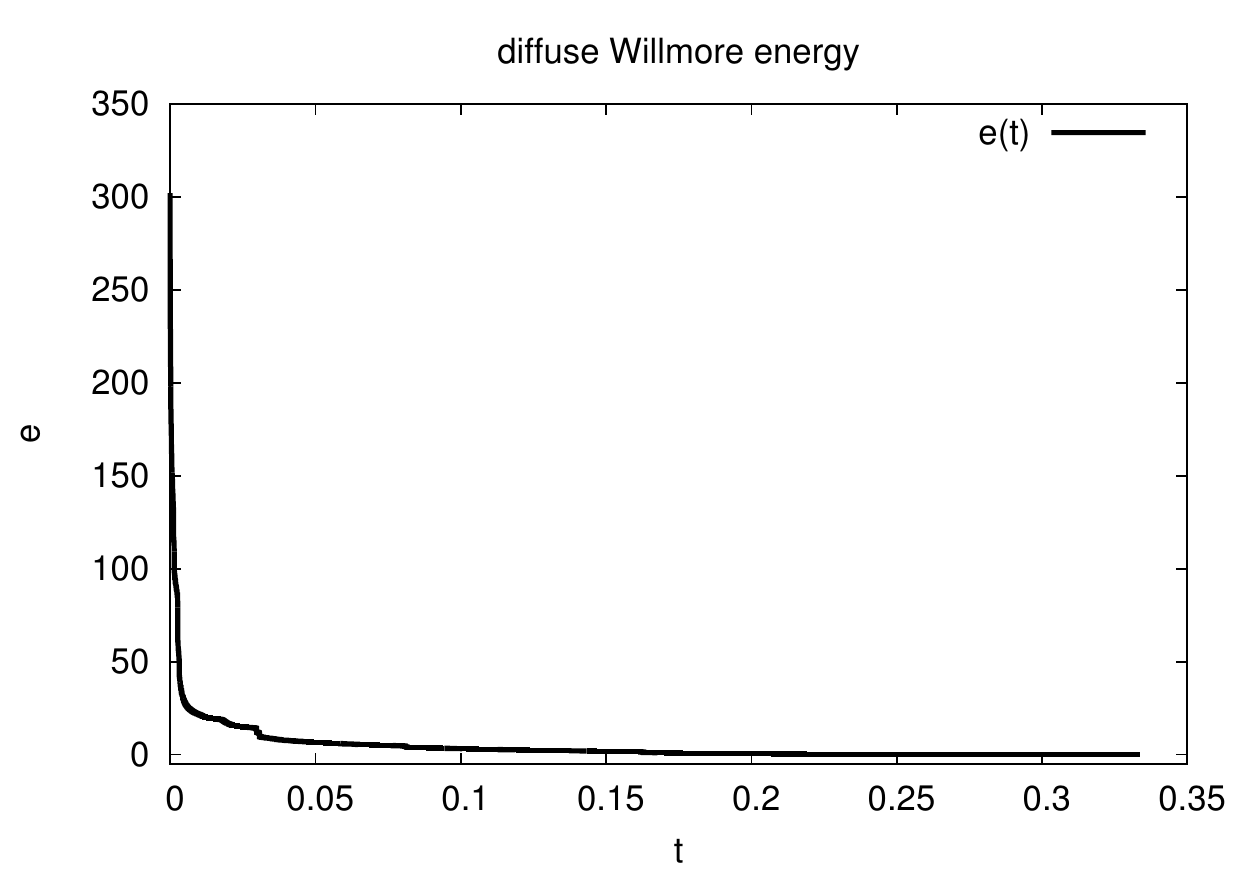}
\caption{\footnotesize \label{fig3} Evolution of ``standard'' diffuse-interface
  Willmore flow \eqref{eq:Wf-d}: Diffuse Willmore energy $e(t) =
  \mathcal{W}_\eps(u_h(\cdot, t))$ versus time $t$.}
\end{center}
\end{figure}

\subsubsection{Two circles initial condition}
\label{sec:twocirclesinitialcondition}
As a further example, we consider an initial condition with two circles
with radii $0.2$ and $0.3$. Contour plots of $u_h$ at different times
are shown in Fig. \ref{fig:wil2}, where after collision of interfaces
a transversal intersection appears.

\begin{figure}[here]
\begin{center}
\includegraphics*[width=0.24\textwidth]{./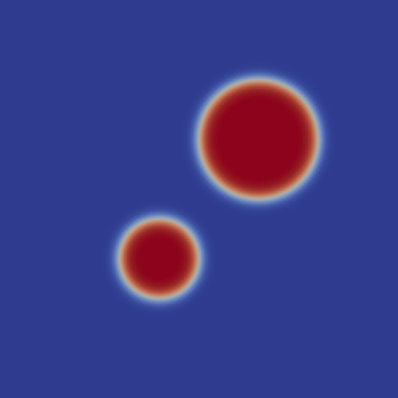}
\includegraphics*[width=0.24\textwidth]{./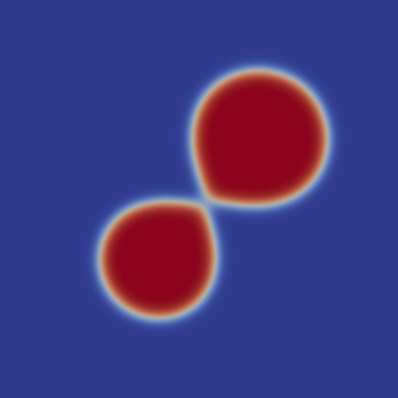}
\includegraphics*[width=0.24\textwidth]{./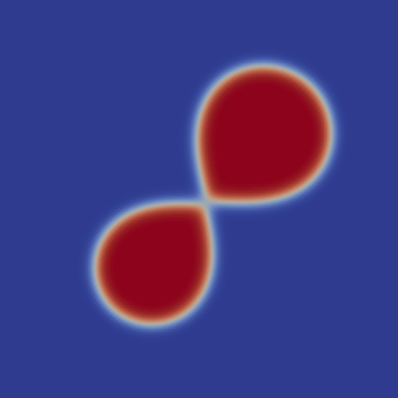}
\includegraphics*[width=0.24\textwidth]{./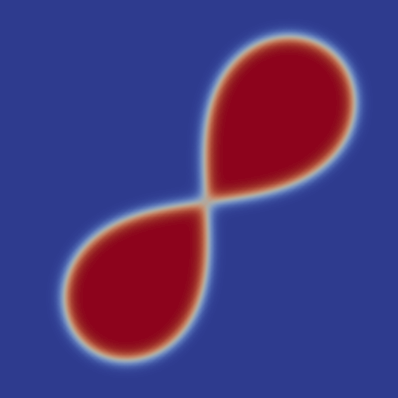}
\caption{\footnotesize \label{fig:wil2} Evolution of ``standard'' diffuse-interface
  Willmore flow \eqref{eq:Wf-d}: Discrete phase-field $u_h$ for different times
  $t=0$, $t\approx0.003$, \mbox{$t\approx0.0045$,} $t\approx0.0200$.}
\end{center}
\end{figure}

\subsection{Finite differences implementation}
\label{sec:finitedifferencesimplementation}
In this subsection, we implement the standard diffuse Willmore flow using finite differences, and observe the same unexpected behavior in 2D as in the previous subsections: the colliding interfaces form a ``cross'', even though this should be precluded by the sharp-interface energy.
Therefore, it seems unlikely that this surprising behavior is due to a numerical artifact: it appears to be an intrinsic feature of the standard approximation (\ref{eq:wd}).
However, we also provide simulations in three dimensions that suggest that this phenomenon might be two dimensional only:
The standard diffuse-interface model inspired by De Giorgi's conjecture appears to lead to a topological change (mergers) when two typical surfaces collide in 3D.
This is in keeping with some of the numerical experiments carried out by Du et. al. in \cite{DuLiWa04,DuLiWa06}.
Whether Gamma-convergent numerical approximations, such as the one due to Bellettini \cite{Be97} or the new one presented in this paper in Section \ref{sec:diff_new}, also lead to a topological change in 3D under these circumstances will be investigated subsequently in Section \ref{sec:bellettini_numerics}. 
See also Section \ref{sec:discussion} for a discussion of why topological changes are in fact more likely in 3D than in 2D for gradient descent of the $L^1$ relaxation of Willmore energy.

In 2D, the results presented in Figure \ref{fig:standard} are for the diffuse-interface energy
$$ \mathcal{W}_\varepsilon (u) + \gamma \mathcal{H}_\varepsilon (u), $$
where the second term is included to ensure a uniform bound of the diffuse surface area (note however that for evolutions with well-behaved initial conditions this is often automatically satisfied, as for example in the simulations above).\\
Our scheme for its $L^2$ gradient flow is:
\begin{equation}
\label{eq:findiffdegiorgi}
\frac{u^{n+1}-u^n}{\delta t} = -\Delta_h w^n + \left\{ \frac{1}{\varepsilon^2} W''(u^n) + \gamma \right\} w^n
\end{equation}
where
\begin{equation}
\label{eq:v}
w^n := \varepsilon\Delta_h u^n- \frac{1}{\varepsilon} W'(u^n)
\end{equation}
and $\Delta_h$ is the standard centered differences discretization of the Laplacian on a uniform grid.
This is an explicit time stepping scheme, the stability (CFL) condition (upper bound on the time step size $\delta t$) for which scales as $(\delta x)^4$ as $\delta x\to 0^+$.

Alternatively, we have the following semi-implicit version:
\begin{equation}
\label{eq:semiimplicit}
\frac{u^{n+1}-u^n}{\delta t} = -\varepsilon \Delta_h^2 u^{n+1} + \frac{1}{\varepsilon} \Delta_h \Big( W'(u^n) \Big) + \left\{ \frac{1}{\varepsilon^2} W''(u^n) + \gamma \right\} w^n
\end{equation}
where $w^n$ is again as in (\ref{eq:v}).
At each time step, $u^{n+1}$ is solved for via the discrete Fourier transform.
This scheme appears to be stable for much larger time step sizes than (\ref{eq:findiffdegiorgi}).
In the interest of minimizing the possibility of numerical artifacts, we refrain from further attempts to improve the computational efficiency of the schemes used here, even though there is no shortage of classical techniques for doing so.

Numerical simulations using scheme (\ref{eq:semiimplicit}) are shown in Figure \ref{fig:standard}.
The computational domain was $[0,1]^2$, and the diffuse-interface parameter was chosen to be $\varepsilon = 0.03$.
The spatial resolution was $200\times 200$.
The parameter $\gamma$ was taken to be $\frac{1}{4}$.
The results testify to the same surprising qualitative behavior as in simulations of Sections \ref{sec:symmetricinitialcondition} and \ref{sec:twocirclesinitialcondition}.
Thus, this appearance of crossings when interfaces collide appears to be a robust, inherent feature of the standard diffuse-interface approximation.

\begin{figure}[h]
\begin{center}
\includegraphics[width=0.24\textwidth]{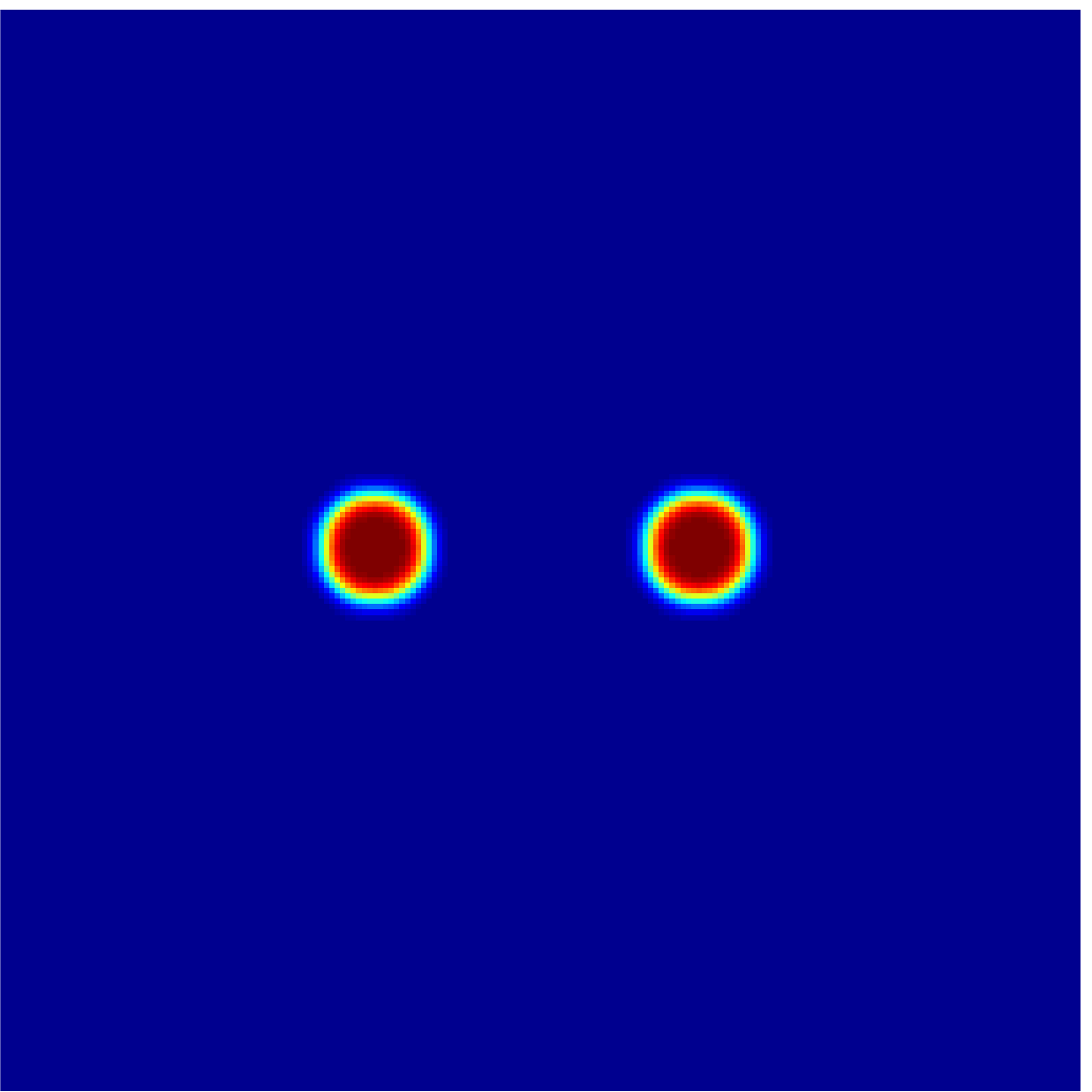}
\includegraphics[width=0.24\textwidth]{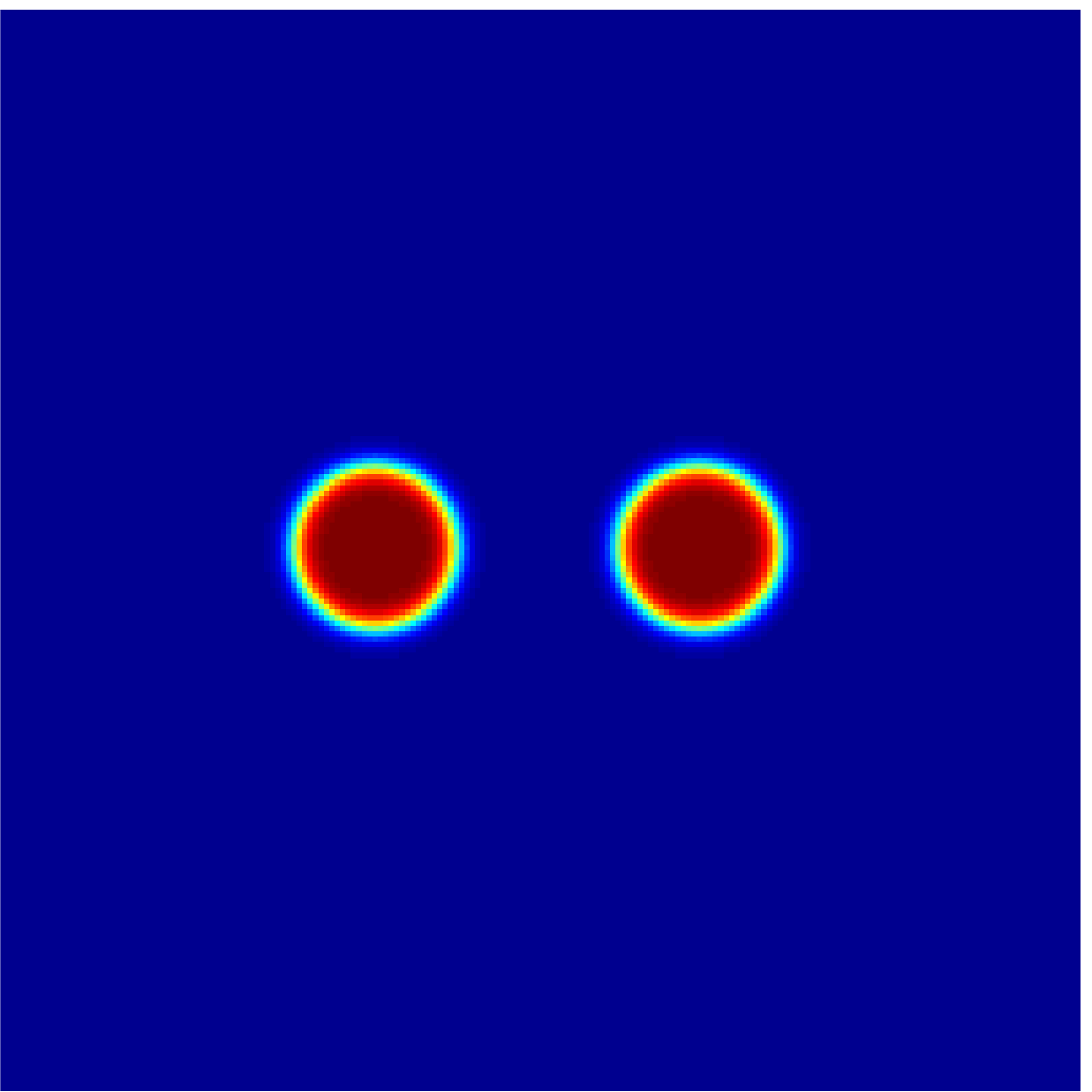}
\includegraphics[width=0.24\textwidth]{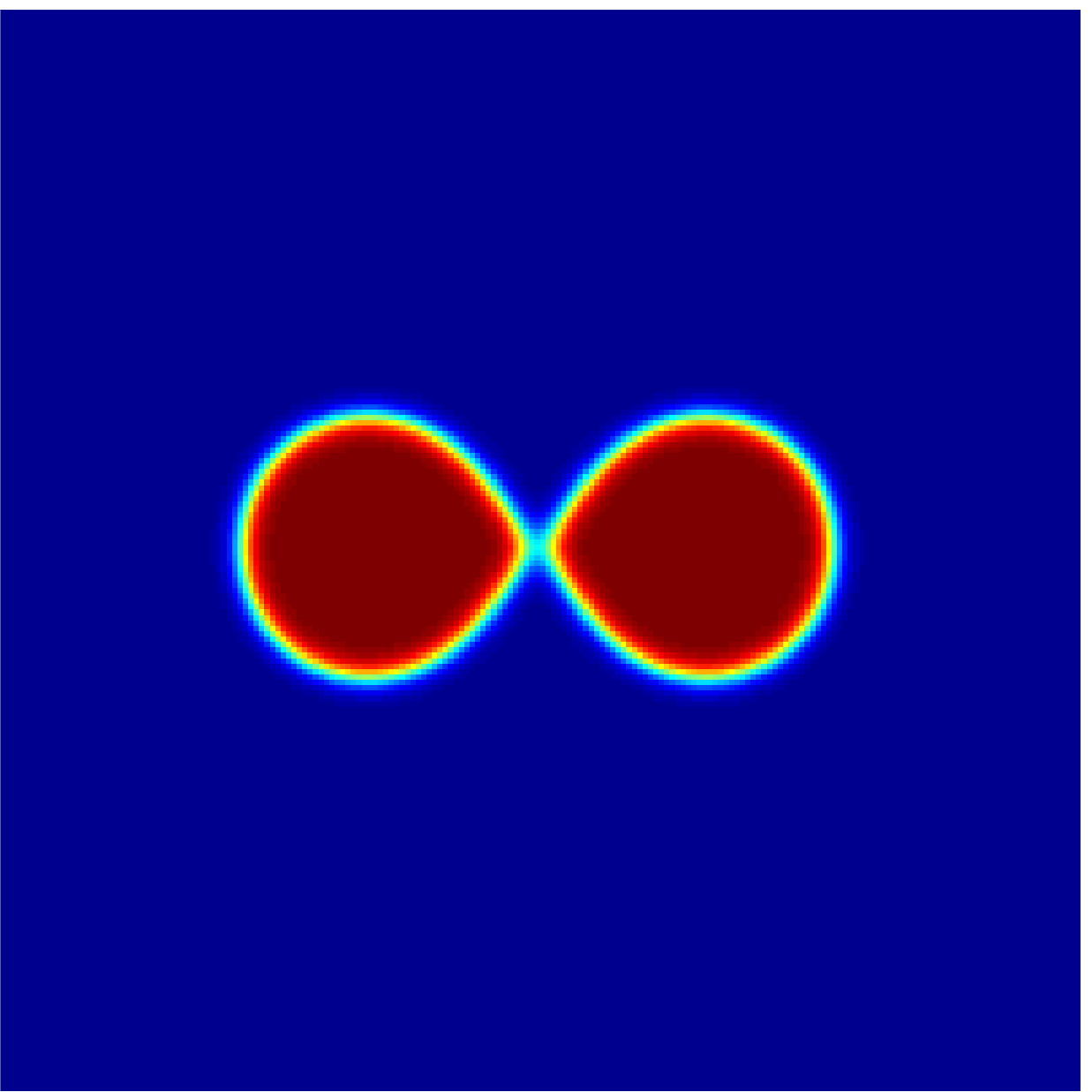}
\includegraphics[width=0.24\textwidth]{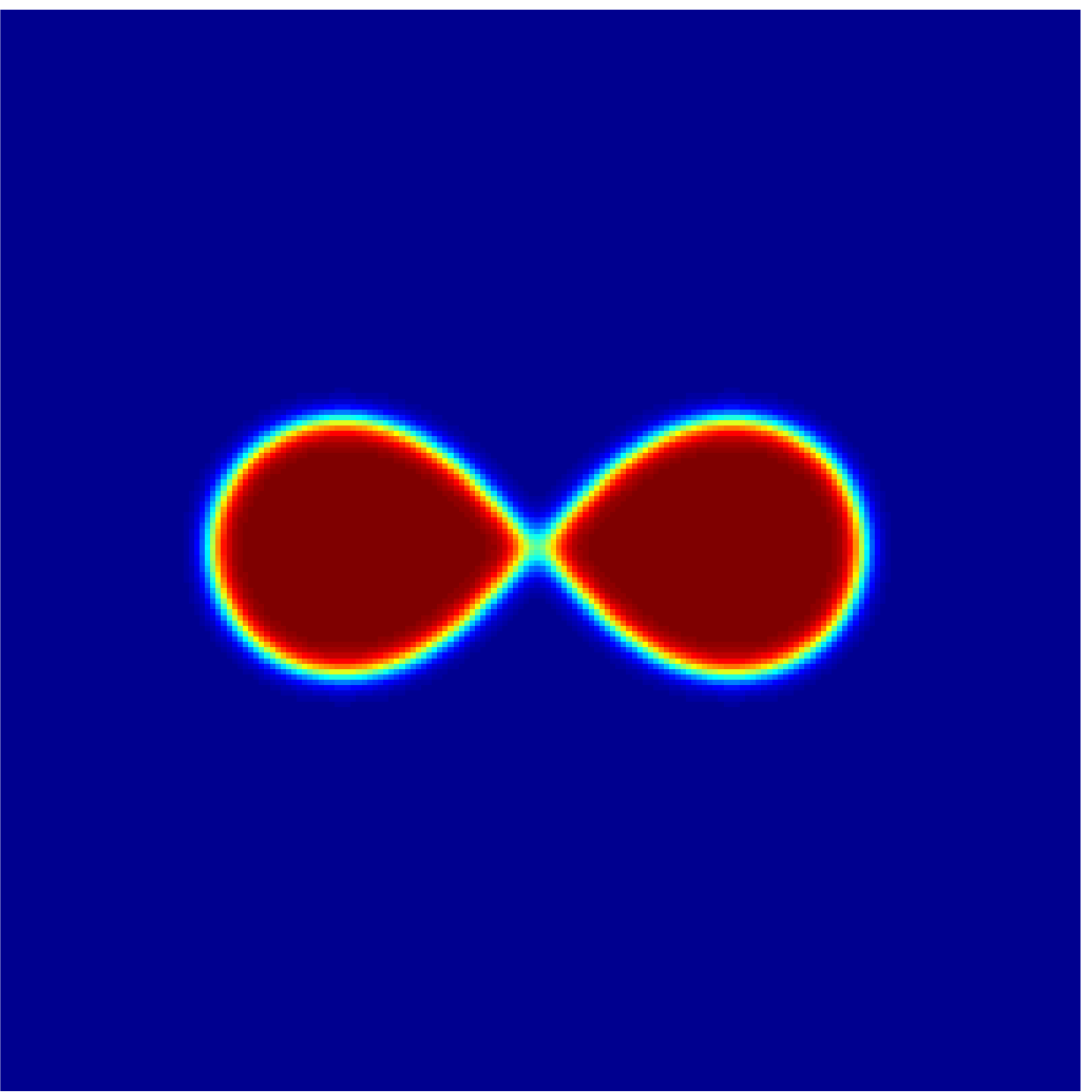}
\caption{\footnotesize Gradient flow for the standard diffuse-interface approximation of Willmore energy, using the finite differences scheme \ref{eq:semiimplicit}.}
\label{fig:standard}
\end{center}
\end{figure}

We now turn to some 3D simulations, again with the standard diffuse-interface approximation (\ref{eq:Wf-d}).
The natural analogue of the two disks initial condition in three dimensions is two disjoint spheres.
However, unlike disks in 2D, spheres in 3D are stationary under Willmore flow, and would in fact shrink to naught in the presence of even the slightest additional penalty on perimeter (i.e. when $\gamma>0$).
We therefore add an expansionary bulk energy term:
\begin{equation}
\mathcal{W}_\varepsilon (u) + \gamma \mathcal{H}_\varepsilon (u) - \alpha \int u \, dx.
\end{equation}
with $\alpha>0$.
Schemes (\ref{eq:findiffdegiorgi}) and (\ref{eq:semiimplicit}), which correspond to the $\alpha=0$ case, adapt trivially to the $\alpha\not=0$ case.

Figure \ref{fig:expand2d} shows that the inclusion of the expansion term in 2D does not alter the formation of a cross (and failure to merge and become regular) for the two disk initial data.
This simulation was carried out with the same parameters as before and $\alpha = 15$.
\begin{figure}[h]
\begin{center}
\includegraphics[width=0.24\textwidth]{figures/findiff/standard1}
\includegraphics[width=0.24\textwidth]{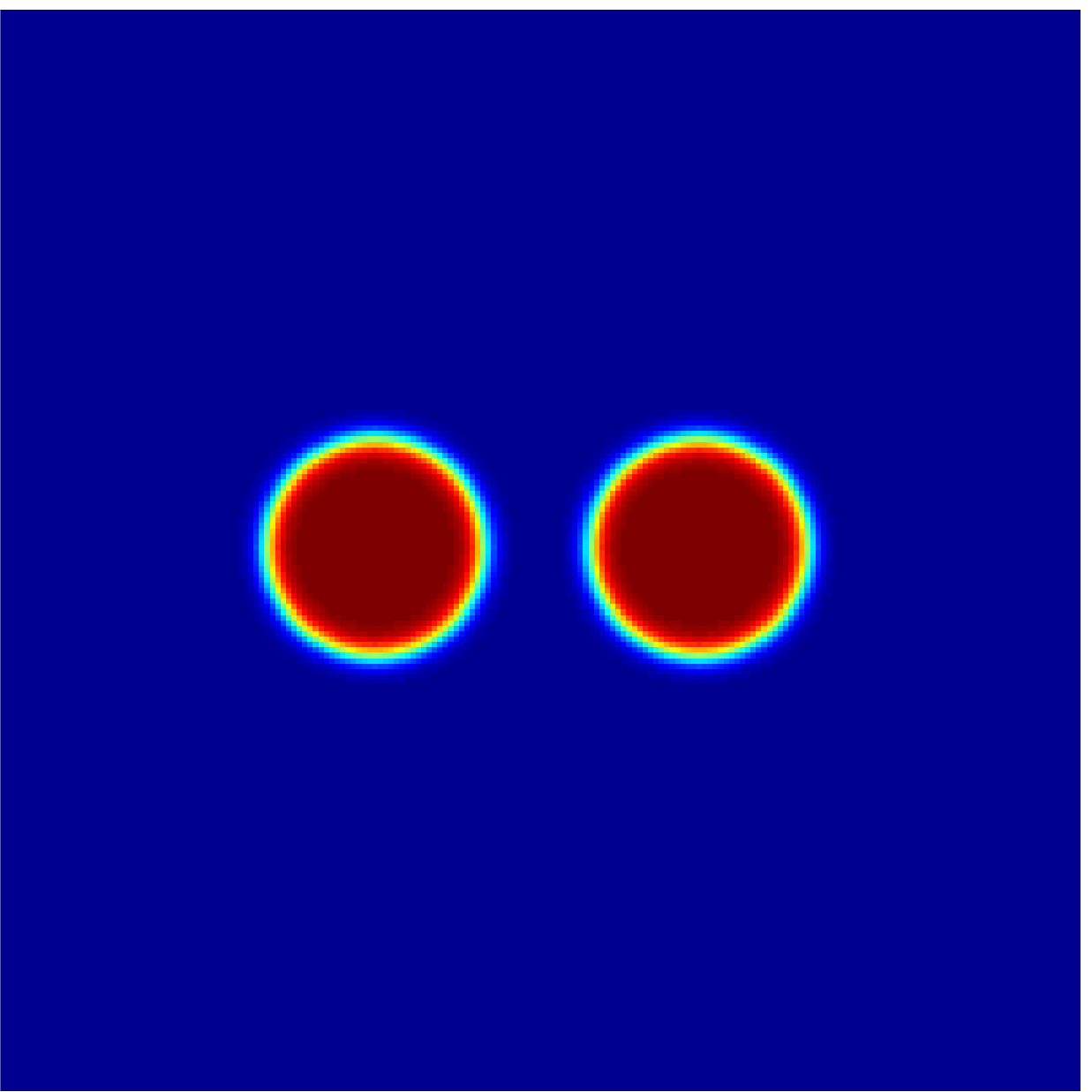}
\includegraphics[width=0.24\textwidth]{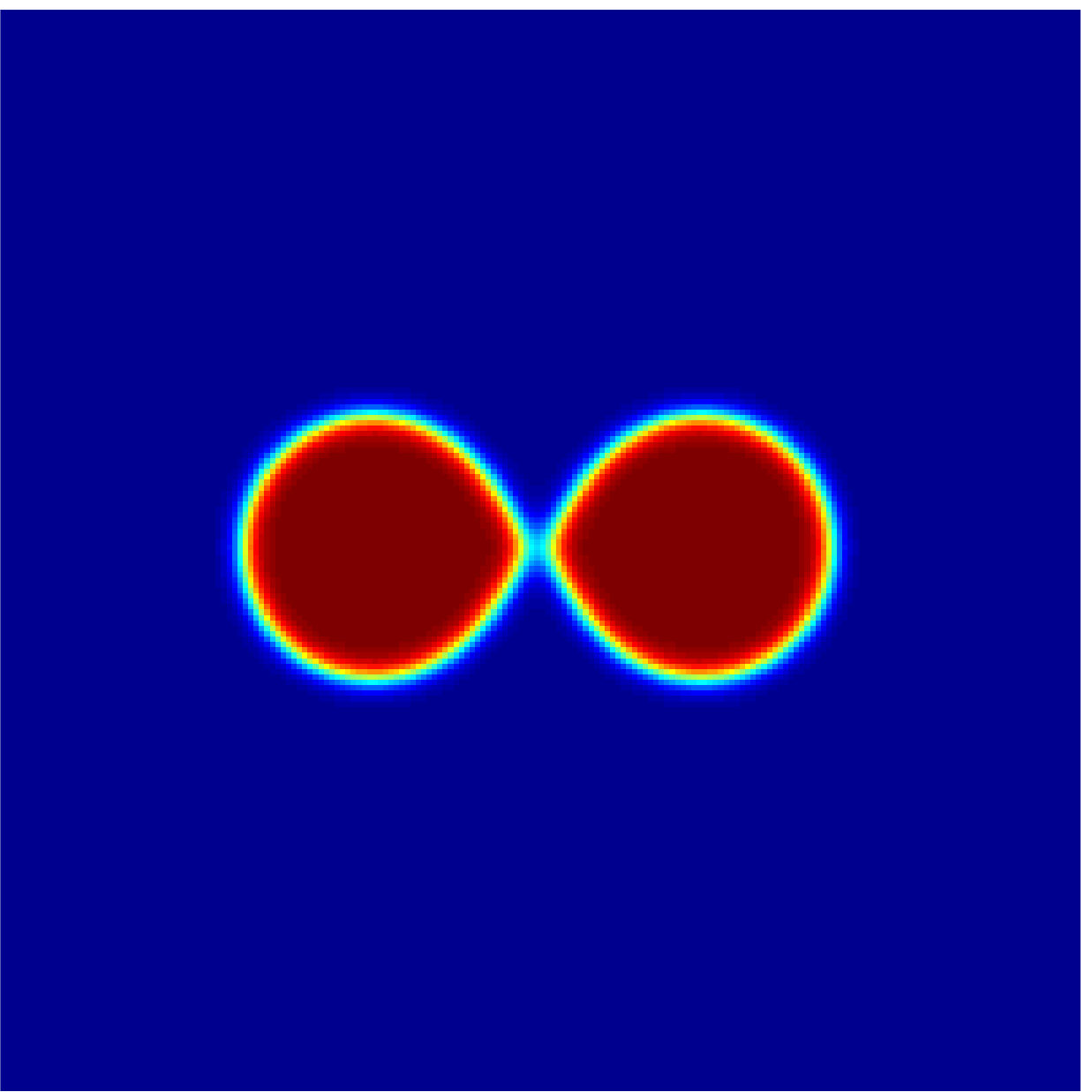}
\includegraphics[width=0.24\textwidth]{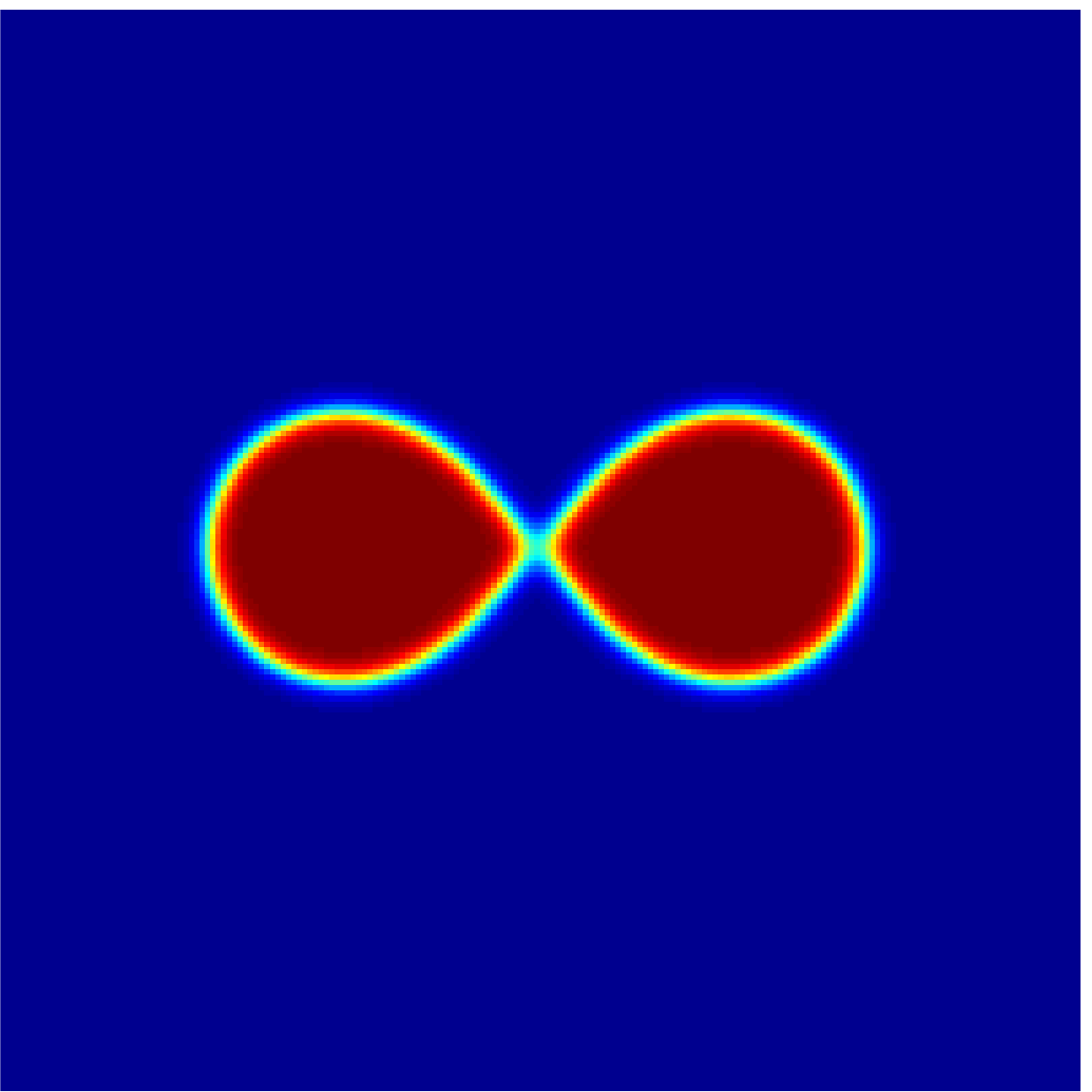}
\caption{\footnotesize Willmore flow with perimeter penalty and volumetric expansion term, simulated using a finite differences discretization of the standard diffuse-interface approximation.}
\label{fig:expand2d}
\end{center}
\end{figure}

Figure \ref{fig:expand3d} shows simulations in 3D with two disjoint spheres of equal size as initial data.
The parameters were $\varepsilon = 0.06$, $\gamma=\frac{1}{4}$, and $\alpha = 15$.
The spatial resolution was $100\times 100\times 100$.
The volumetric term leads to expansion of the spheres, which eventually touch.
Unlike the 2D situation, the two surfaces merge and become instantaneously regular.

\begin{figure}[h]
\begin{center}
\includegraphics[width=0.24\textwidth]{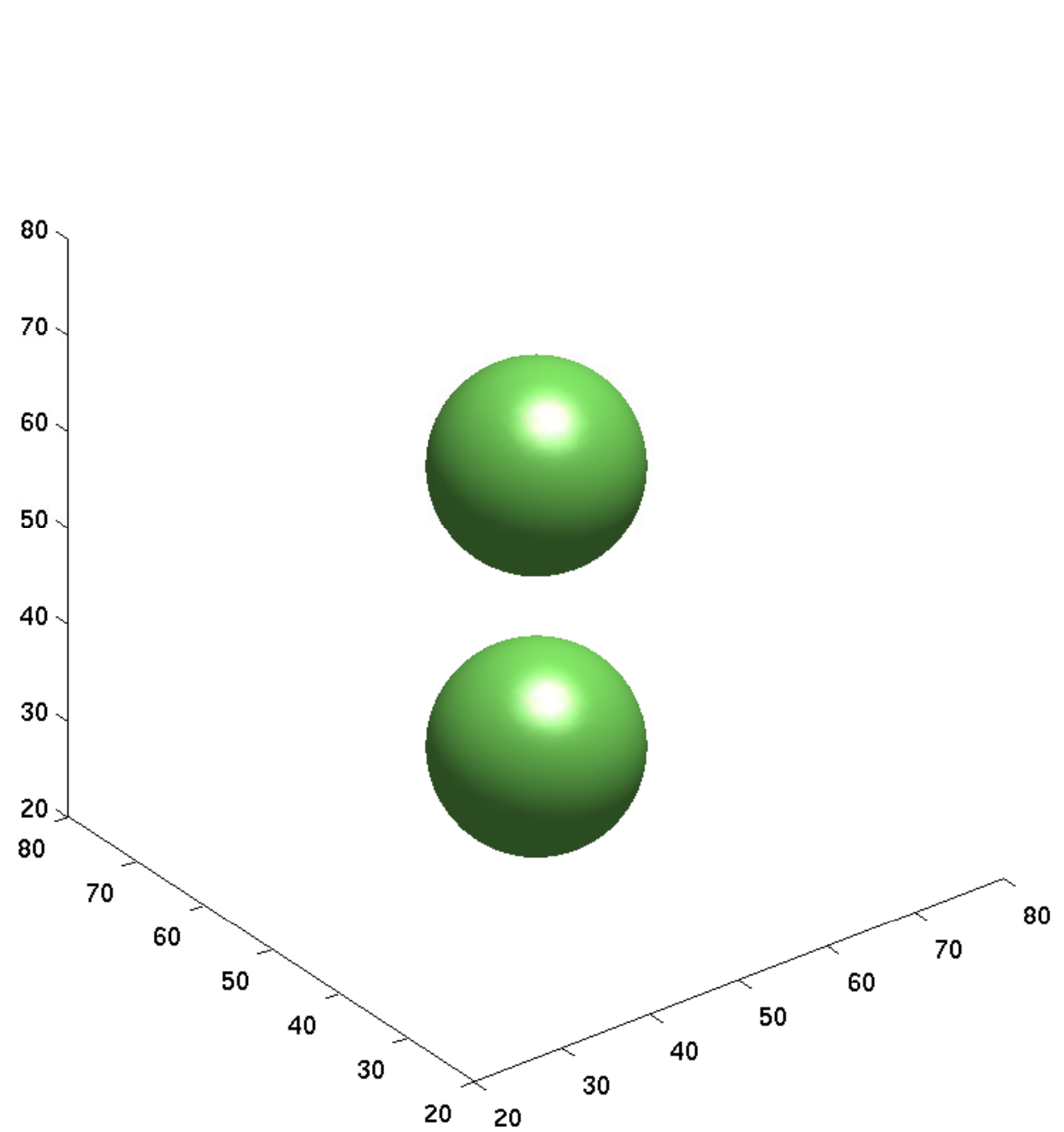}
\includegraphics[width=0.24\textwidth]{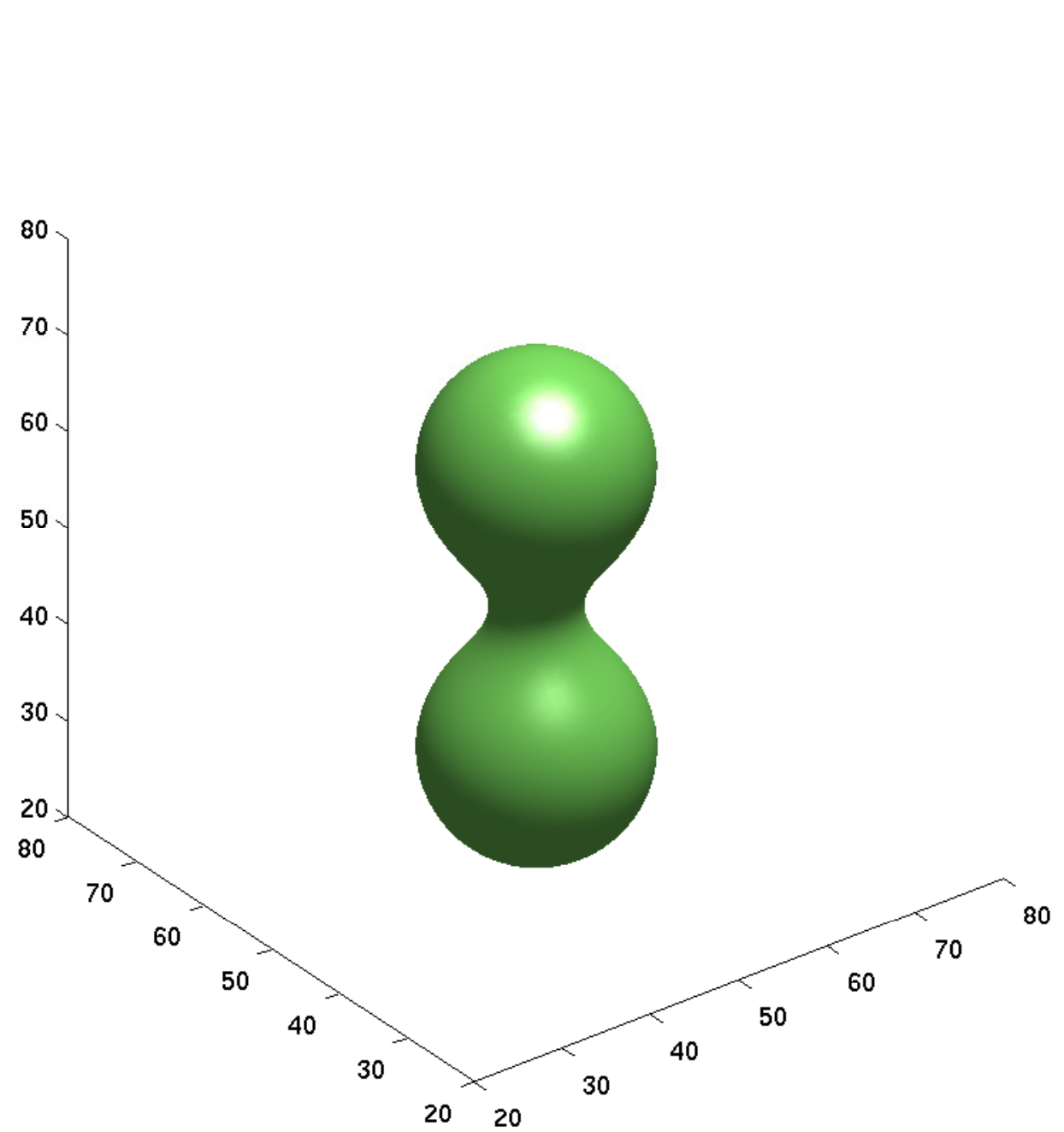}
\includegraphics[width=0.24\textwidth]{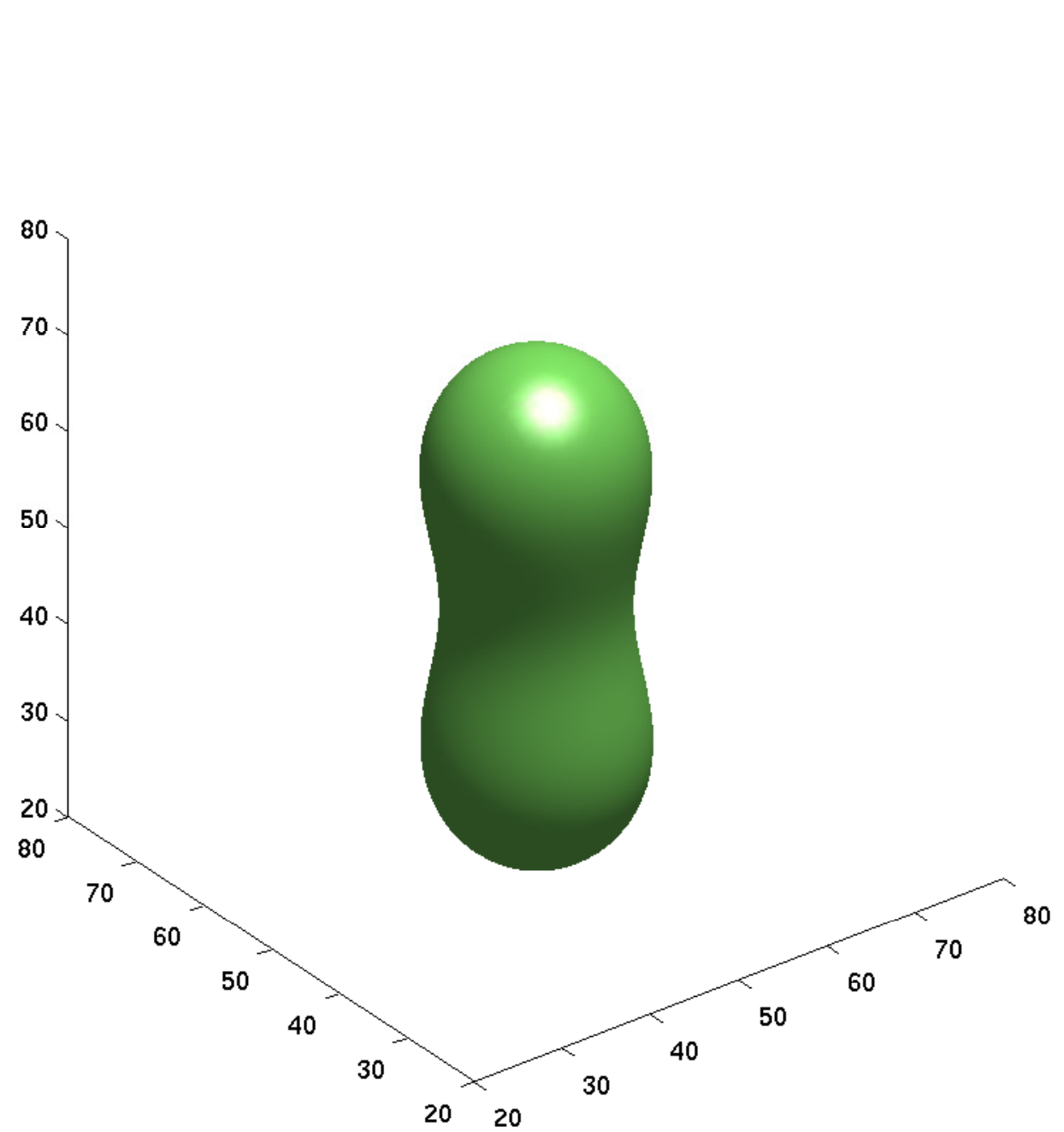}
\includegraphics[width=0.24\textwidth]{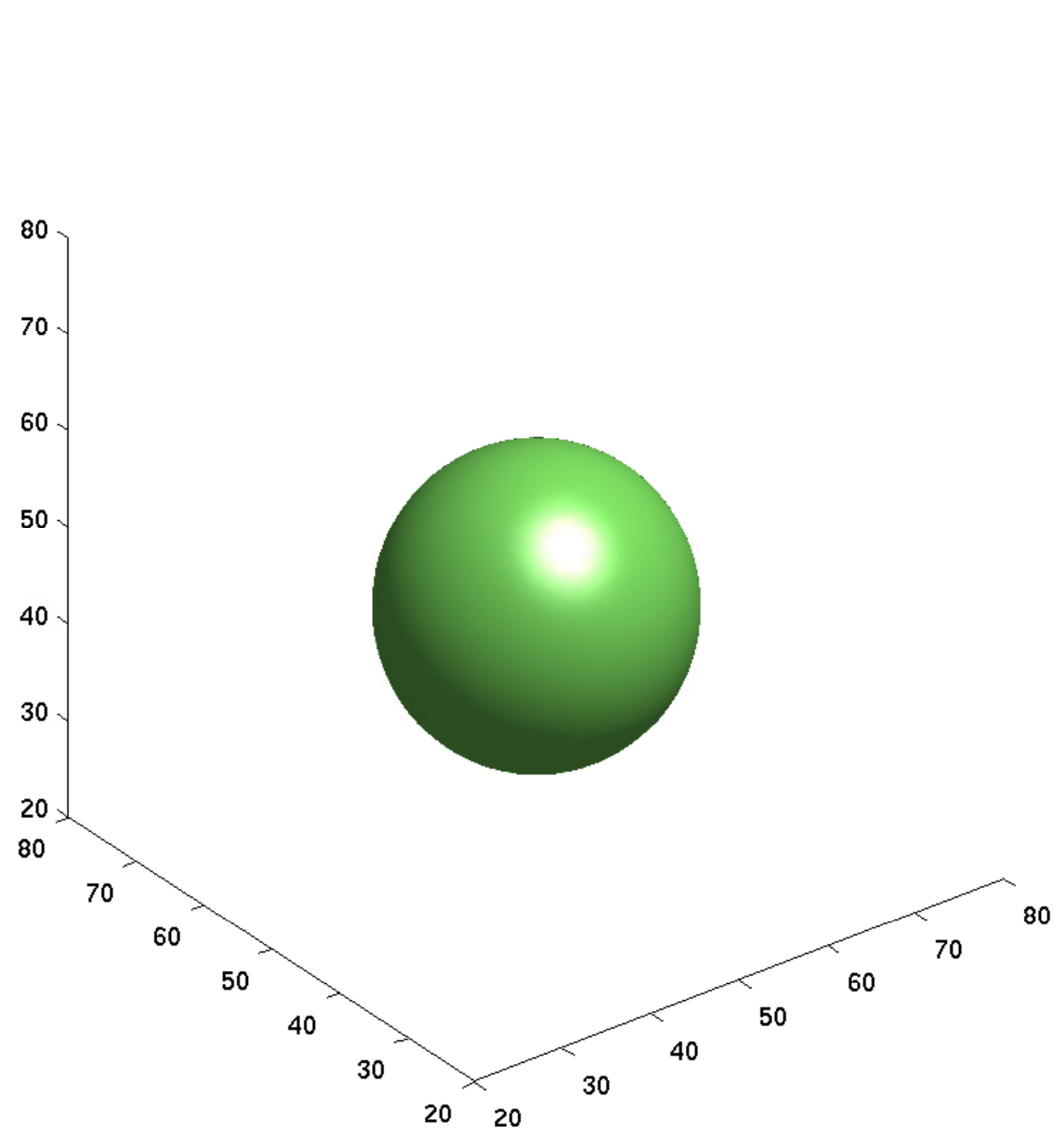}
\caption{\footnotesize 3D simulations with the standard diffuse-interface approximation of Willmore flow, together with a volumetric expansion term.}
\label{fig:expand3d}
\end{center}
\end{figure}

\section{Numerical simulations with Bellettini's approximation}
\label{sec:bellettini_numerics}
In this section, we provide numerical experiments in the plane with Bellettini's Gamma-convergent approximation (\ref{eq:bell}), and investigate what happens at topological changes.
In the interest of isolating potential numerical artifacts from inherent behavior of Willmore flow that results from this approximation, we keep the numerical scheme as simple as possible: It is a straight-forward finite differences discretization, with explicit time stepping.
Unsurprisingly, computations with this scheme are very slow, owing to the extremely stringent stability restriction on the time step size.
However, they appear to be also very robust.
In particular, a discrete form of the energy is observed to decrease at every time step.
Due to the delicate nature of the question (topological changes in a {\em fourth} order gradient flow for a curvature dependent functional!), we give full details of the implementation.

We will work with the following regularized version of (\ref{eq:bell}):
\begin{equation}
\label{eq:regbell}
\mathcal{W}_{\varepsilon,\delta}(u) := \frac{1}{2}\int \left\{ \left( \nabla \cdot \frac{\nabla u}{\sqrt{|\nabla u|^2+\delta}} \right)^2 + \gamma \right\} \left( \frac{\varepsilon}{2} |\nabla u|^2 + \frac{1}{\varepsilon} W(u) \right) \, dx.
\end{equation}
Here $\gamma>0$ is added to ensure a uniform bound for $\mathcal{H}_\eps(u_\eps)$, necessary for Gamma-convergence to the $L^1$-relaxation of the Willmore functional.
For simplicity, our exposition is restricted to $\mathbb{R}^2$ below; extension to arbitrary dimensions is straight forward.
To begin with, the $L^2$ gradient flow for (\ref{eq:regbell}) leads to the following evolution:
\begin{equation}
\begin{split}
 \partial_t u =& -\frac{\partial}{\partial x} \left( \frac{ (u_y^2+\delta) \partial_x(\kappa_\delta {h}_\varepsilon) - u_x u_y \partial_y(\kappa_\delta {h}_\varepsilon) }{ (|\nabla u|^2 + \delta)^{\frac{3}{2}} } \right)\\
& -\frac{\partial}{\partial y} \left( \frac{ (u_x^2+\delta) \partial_y(\kappa_\delta {h}_\varepsilon) - u_x u_y \partial_x(\kappa_\delta {h}_\varepsilon) }{ (|\nabla u|^2 + \delta)^{\frac{3}{2}} } \right)\\
& + \frac{\varepsilon}{2} \nabla \cdot \Big( (\kappa_\delta^2+\gamma) \nabla u \Big) - \frac{1}{2\varepsilon} (\kappa_\delta^2+\gamma)W'(u).
\end{split}
\end{equation}
where
\begin{equation}
\kappa_\delta := \nabla \cdot \left( \frac{\nabla u}{\sqrt{|\nabla u|^2 + \delta}} \right)
\end{equation}
and ${h}_\varepsilon$ denotes the diffuse surface area density, ${h}_\varepsilon(u)= \frac{\eps}{2}|\nabla u|^2 + \eps^{-1}W(u)$.

Working on a uniform spatial grid with periodic boundary conditions, let $D^+$ and $D^-$ denote the standard forward and backward difference quotients in the direction of their subscript.
Let $u^n$ denote the solution at the $n$-th time step.
Let's start with the discretization of the diffuse surface area density  ${h}_\varepsilon(u)$:
\begin{equation}
M(u) = \frac{\varepsilon}{4} \left( \big(D^+_x u\big)^2 + \big(D^-_x u\big)^2 + \big(D^+_y u\big)^2 + \big(D^-_y u\big)^2 \right) + \frac{1}{\varepsilon} W(u). 
\end{equation}
The denominator in the curvature term $\kappa_\delta$ can be discretized in any one of the following four ways (we'll use all):
\begin{equation}
\label{eq:Du}
Du^{\pm,\pm} = \sqrt{ (D^\pm_x u)^2 + (D^\pm_y u)^2 + \delta }
\end{equation}
where the first and second $\pm$ in the superscript of $Du$ refer to the signs of difference quotients in the $x$ and the $y$ directions, respectively.
The superscript will be dropped for convenience below, whenever it is just $\pm,\pm$.
The choice of sign for the difference quotient in each coordinate direction in (\ref{eq:Du}) determines the signs of all subsequent difference quotients, as indicated with $\pm$ or $\mp$ signs below. 
Approximation to the curvature term $\kappa_\delta$ can be obtained as:
\begin{equation}
K^{\pm,\pm}(u) = D^\mp_x \left( \frac{D^\pm_x u}{Du} \right) + D^\mp_y \left( \frac{D^\pm_y u}{Du} \right)
\end{equation}
where the superscripts of $K$ indicate the signs for the difference quotients in the $x$ and $y$ directions, respectively.

Next, define
\begin{equation}
  \begin{split}
    A_1^{\pm,\pm}(u) &:= D^\mp_x \left( \frac{ \left[ D^\pm_x (K(u)M(u))
        \right] \left[ (D^\pm_y u)^2 + \delta \right] }{(Du)^3} \right)\\
    & \quad - D^\mp_y \left( \frac{ \left[ D^\pm_x (KM) \right] (D^\pm_x u) (D^\pm_y u) }{(Du)^3} \right),\\
    A_2^{\pm,\pm}(u) &:= D^\mp_y \left( \frac{ \left[ D^\pm_y (K(u)M(u))
        \right] \left[ (D^\pm_x u)^2 + \delta \right] }{(Du)^3} \right)\\
    & \quad - D^\mp_x \left( \frac{ \left[ D^\pm_y (K(u)M(u)) \right] (D^\pm_x u) (D^\pm_y u) }{(Du)^3} \right).
  \end{split}
\end{equation}
where superscripts of $A_1$ and $A_2$ indicate once again the chosen signs for the difference quotients in the $x$ and $y$ directions, in that order.
Let
\begin{equation}
\begin{split}
A_1^*(u) &:= A_1^{+,+}(u) + A_1^{+,-}(u) + A_1^{-,+}(u) + A_1^{-,-}(u),\\
A_2^*(u) &:= A_2^{+,+}(u) + A_2^{+,-}(u) + A_2^{-,+}(u) + A_2^{-,-}(u).
\end{split}
\end{equation}
Define also the discrete squared curvature:
\begin{equation}
K_*^2(u) := \frac{1}{4} \left( (K^{+,+}(u))^2 + (K^{+,-}(u))^2 + (K^{-,+}(u))^2 + (K^{-,-}(u))^2 \right).
\end{equation}
Let
\begin{equation}
\begin{split}
A_3(u) :=& - \frac{\varepsilon}{2} \Big\{ D^-_x \big( (K_*^2+\gamma) D^+_x u^n \big) + D^-_y \big( (K_*^2+\gamma) D^+_y u^n \big) +\\
& D^+_x \big( (K_*^2+\gamma) D^-_x u^n \big) + D^+_y \big( (K_*^2+\gamma) D^-_y u^n \big) \Big\}\\
&+ \frac{1}{\varepsilon} \big( K_*^2 + \gamma \big) W'(u^n).
\end{split}
\end{equation}
Finally, our update scheme is:
\begin{equation}
\frac{u^{n+1}-u^n}{\delta t} = - \Big(  A_1(u^n) +  A_2(u^n) + \frac{1}{2}A_3(u^n) \Big).
\end{equation}
With time step size $\delta t>0$ chosen small enough compared to the spatial grid size, this scheme is guaranteed to decrease the following discrete form of the energy
\begin{equation}
\label{eq:discrete_bellettini}
\sum_{i,j} \big( K_*^2(u^n_{i,j}) + \gamma \big) M(u^n_{i,j})
\end{equation}
as can be easily verified by differentiating (\ref{eq:discrete_bellettini}) with respect to time, and summing by parts a few times.
Of course, it must be mentioned that the convergence of energy (\ref{eq:discrete_bellettini}) to, say, (\ref{eq:regbell}) as the grid size and the regularization parameter $\delta$ are appropriately sent to $0$ has not been established rigorously; we merely give some numerical evidence.

In 2D, the computational domain was $[0,1]^2$ with a spatial resolution of $100\times 100$.
Periodic boundary conditions were used.
The parameters were chosen to be $\varepsilon = 0.045$, $\gamma = \frac{1}{2}$, and $\delta = 0.01$.
As an initial guess, $u^0$ was taken to be the characteristic function of the union of two disjoint disks.
During the evolution, the disks are observed to initially expand as disks, as expected.
However, once they are within a small distance (related to the diffuse-interface thickness $\varepsilon$) of each other, they are unable to get any closer.
Topological changes appear to be precluded; in particular, neither a merger subsequently leading to a smooth evolution occurs (as was previously seen in various numerical implementations of Willmore flow via implicit representations), nor a corner forms as with the De Giorgi approximation.
Instead, the curves continue to expand, but are no longer circles.

\begin{figure}[h]
\begin{center}
\includegraphics[width=0.24\textwidth]{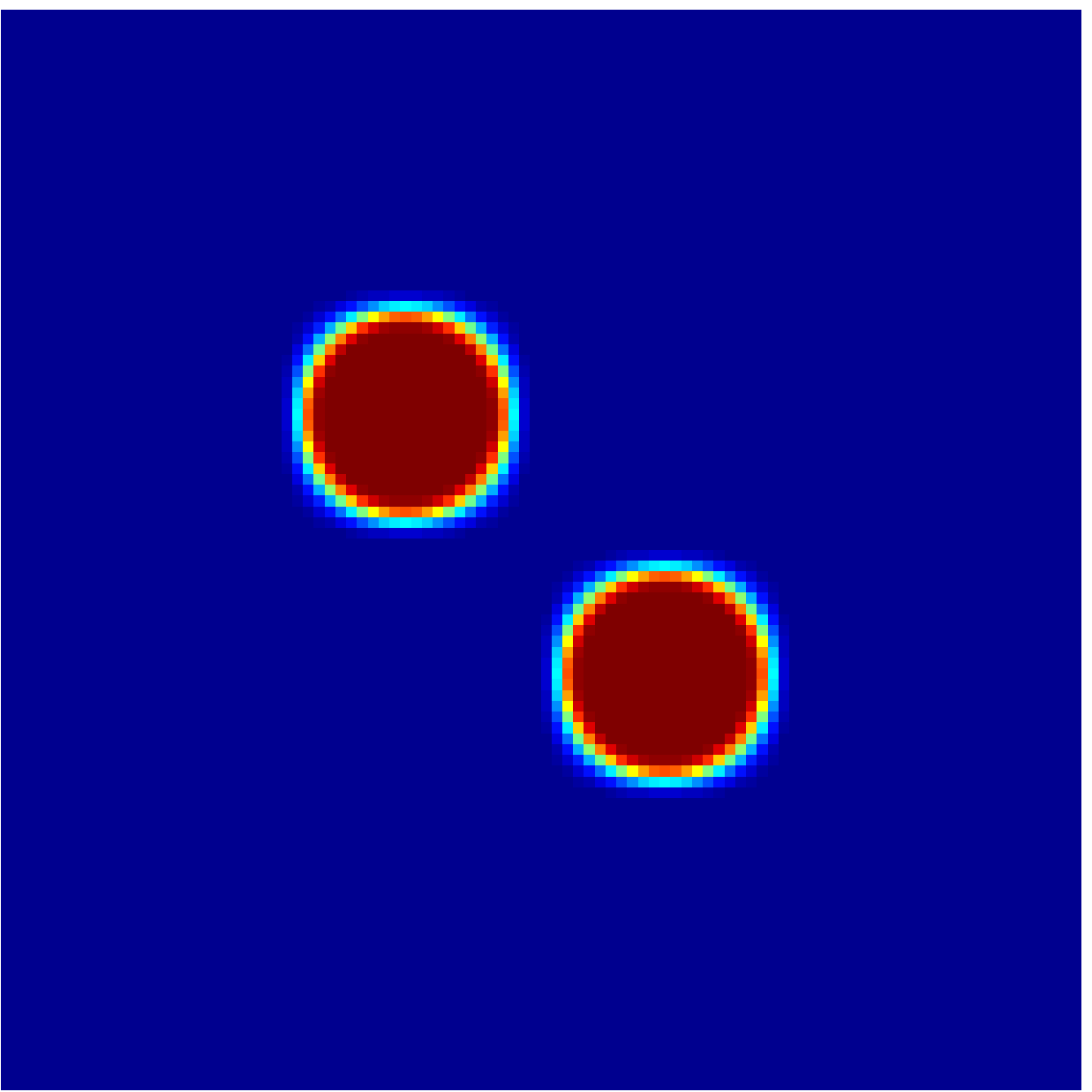}
\includegraphics[width=0.24\textwidth]{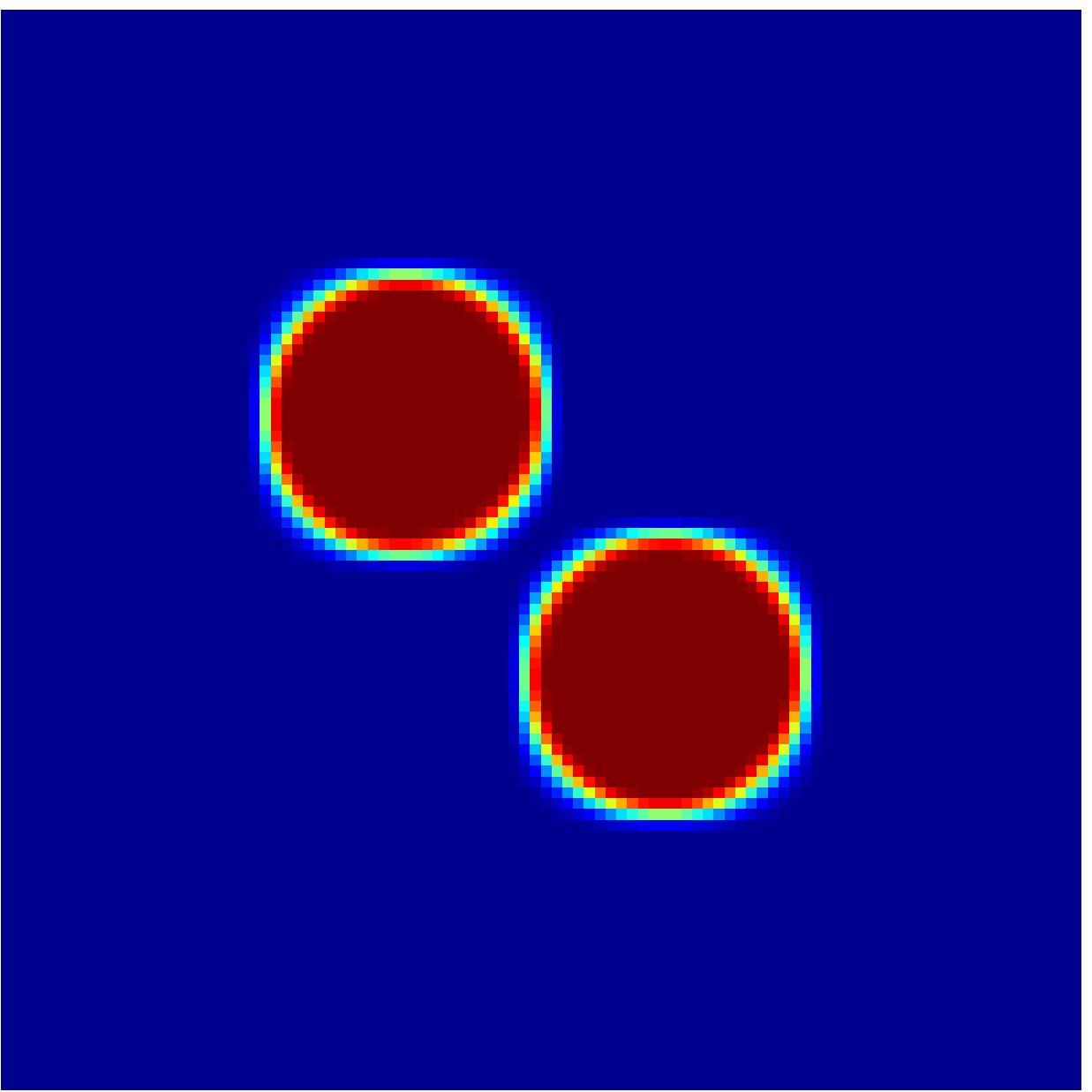}
\includegraphics[width=0.24\textwidth]{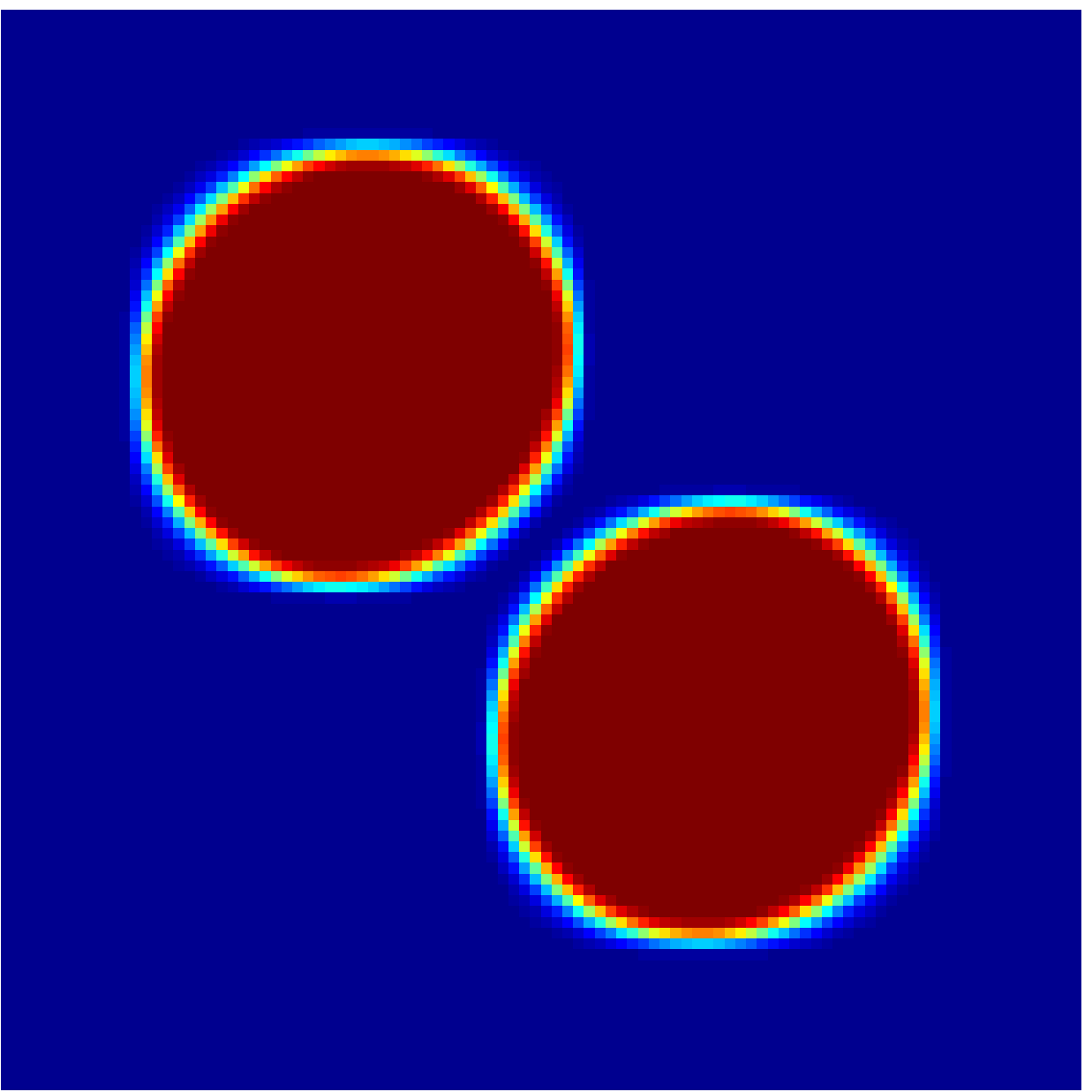}
\includegraphics[width=0.24\textwidth]{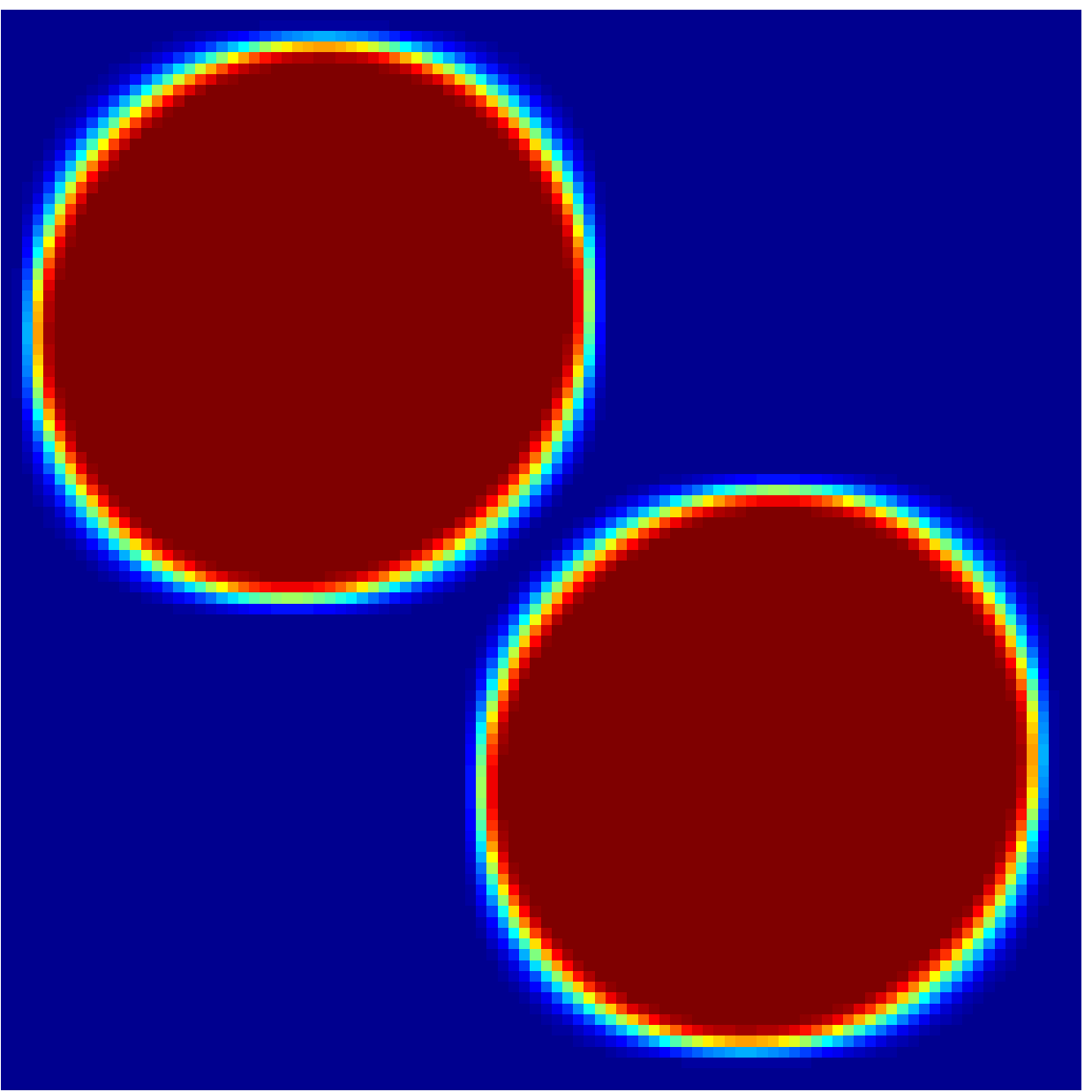}
\caption{\footnotesize Evolution of disks under $L^2$ gradient flow (in the bulk) for Bellettini's approximation to the Willmore energy.
Topological changes appear to be precluded: the disks do not merge.
After getting within a small distance of each other (related to diffuse-interface thickness), they continue their expansion, but are no longer circles.}
\label{fig:bellettini_disks}
\end{center}
\end{figure}

Next, we explore what happens in 3D.
Once again, as in Section \ref{sec:finitedifferencesimplementation}, we include a volumetric expansion term so that spheres would grow.
Figure \ref{fig:cylinders} shows computations with two parallel cylinders as initial condition, which in fact corresponds to a 2D computation; the difference from the experiment of Figure \ref{fig:bellettini_disks} is the inclusion of the volumetric expansion term.
We see that even in the presence of the additional driving force bringing the interfaces into collision, topological change is precluded, just as in the 2D experiment of Figure \ref{fig:bellettini_disks}.

\begin{figure}[h]
\begin{center}
\includegraphics[width=0.24\textwidth]{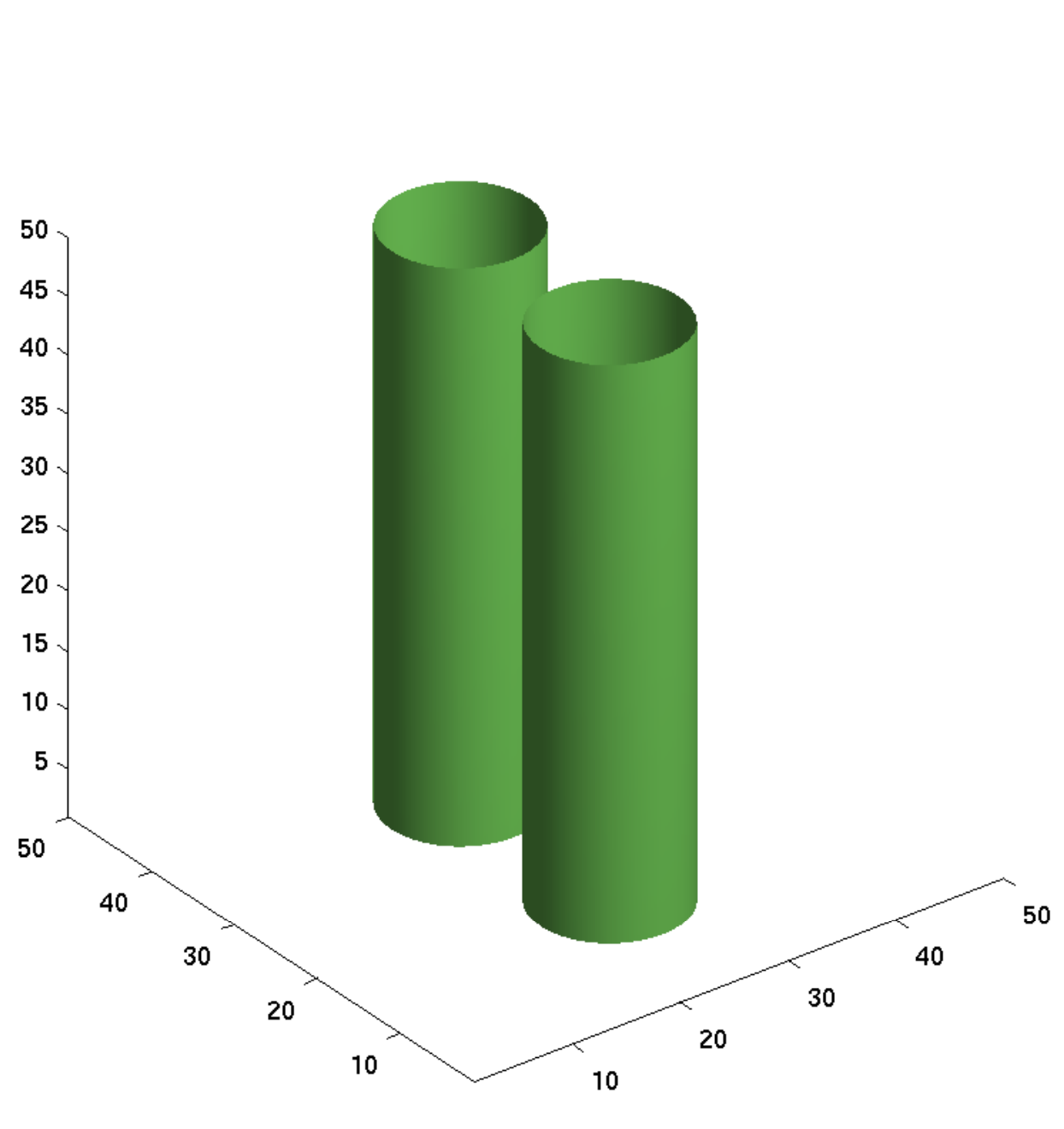}
\includegraphics[width=0.24\textwidth]{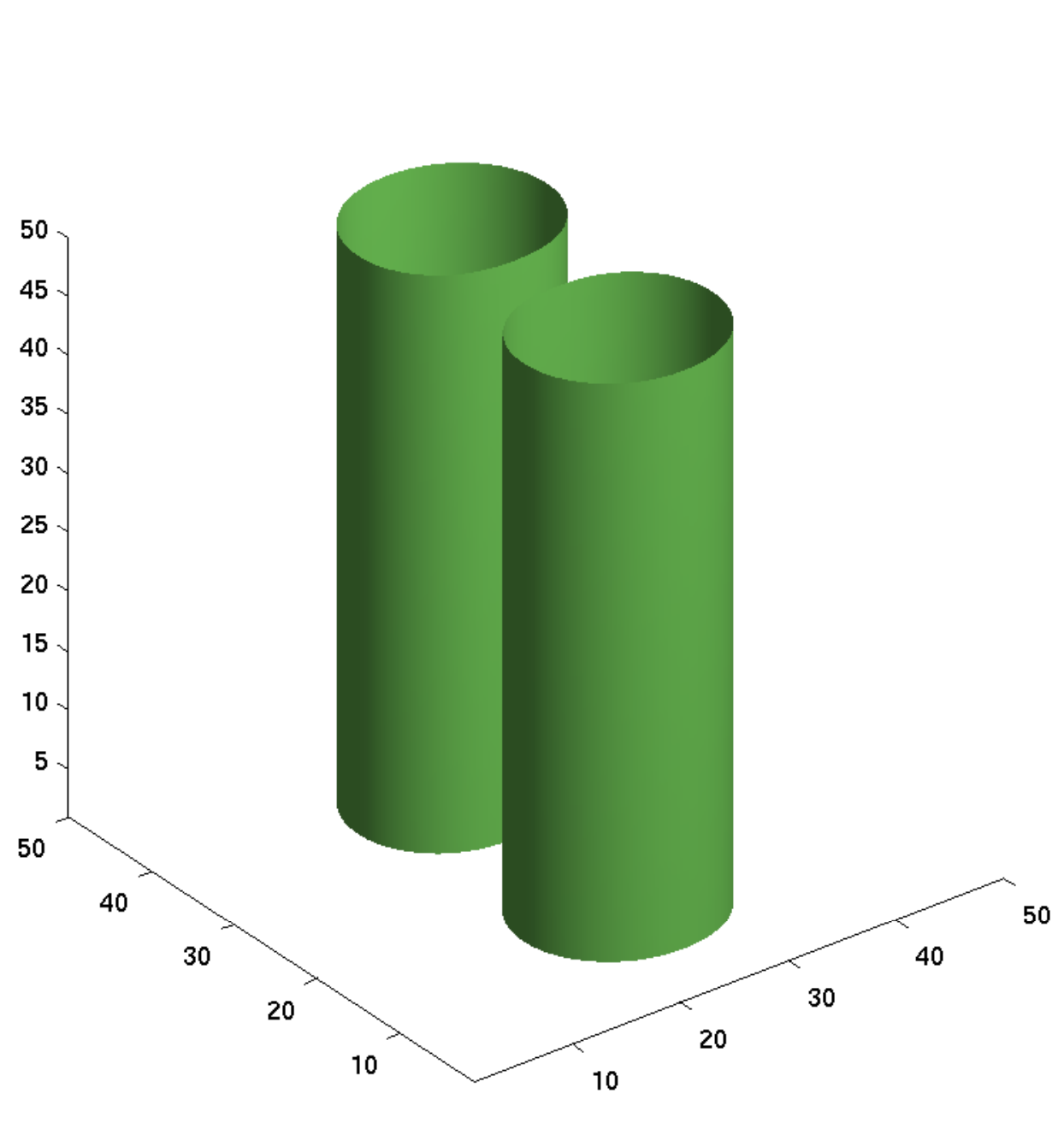}
\includegraphics[width=0.24\textwidth]{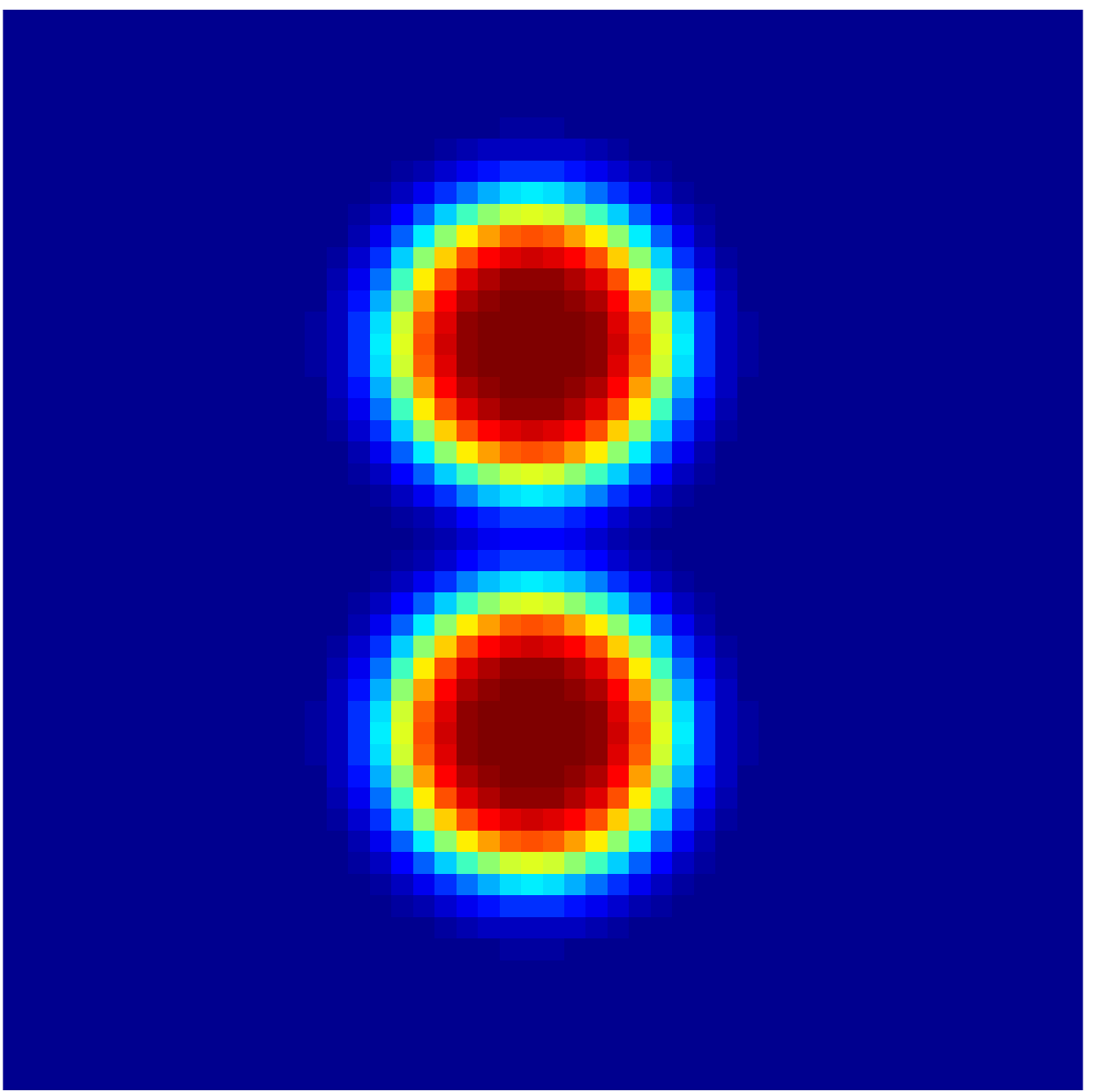}
\includegraphics[width=0.24\textwidth]{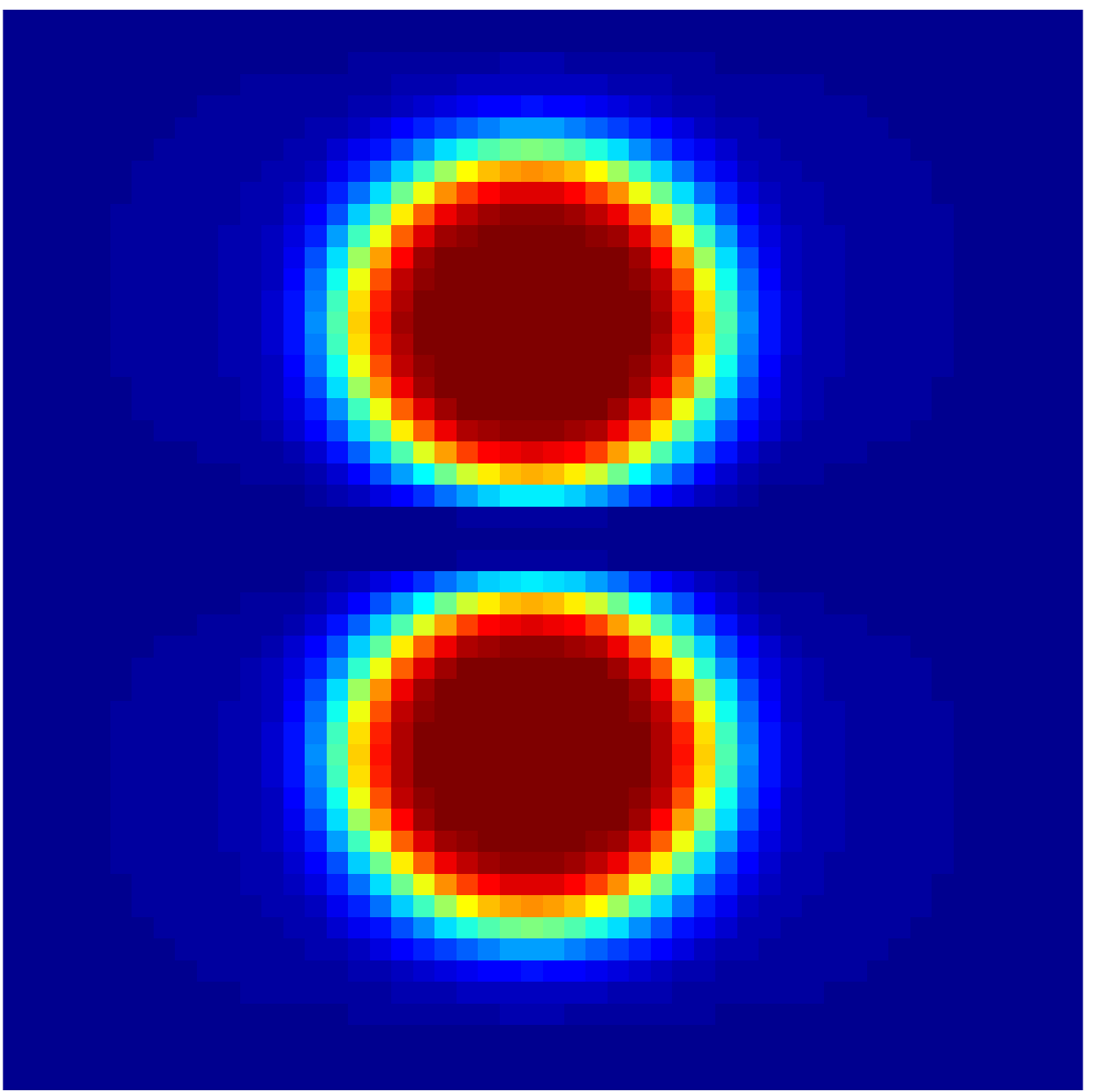}
\caption{\footnotesize Evolution of cylinders under $L^2$ gradient flow for Bellettini's approximation of Willmore energy, with a bulk energy term encouraging expansion added.
The evolution is essentially 2D; the only real difference from the simulation of Figure \ref{fig:bellettini_disks} is the inclusion of the bulk energy term.
Just like in the no bulk energy term case of Figure \ref{fig:bellettini_disks}, the cylinders cannot merge even though they come into contact, despite the additional driving force that slams interfaces into each other.}
\label{fig:cylinders}
\end{center}
\end{figure}

On the other hand, Figure \ref{fig:bellettini_spheres} shows simulations with two disjoint spheres as the initial condition.
The computational domain is $[0,1]^3$ and the spatial resolution is $50\times 50\times 50$.
The parameters were chosen to be $\varepsilon = 0.12$, $\gamma = \frac{1}{4}$, and $\delta = 0.01$.
In this case, the spheres merge once they come into contact (i.e. "feel" each other due to the diffuse-interface thickness).
See the remark in Section \ref{sec:discussion} for an explanation of why this topological change is not precluded.

\begin{figure}[h]
\begin{center}
\includegraphics[width=0.24\textwidth]{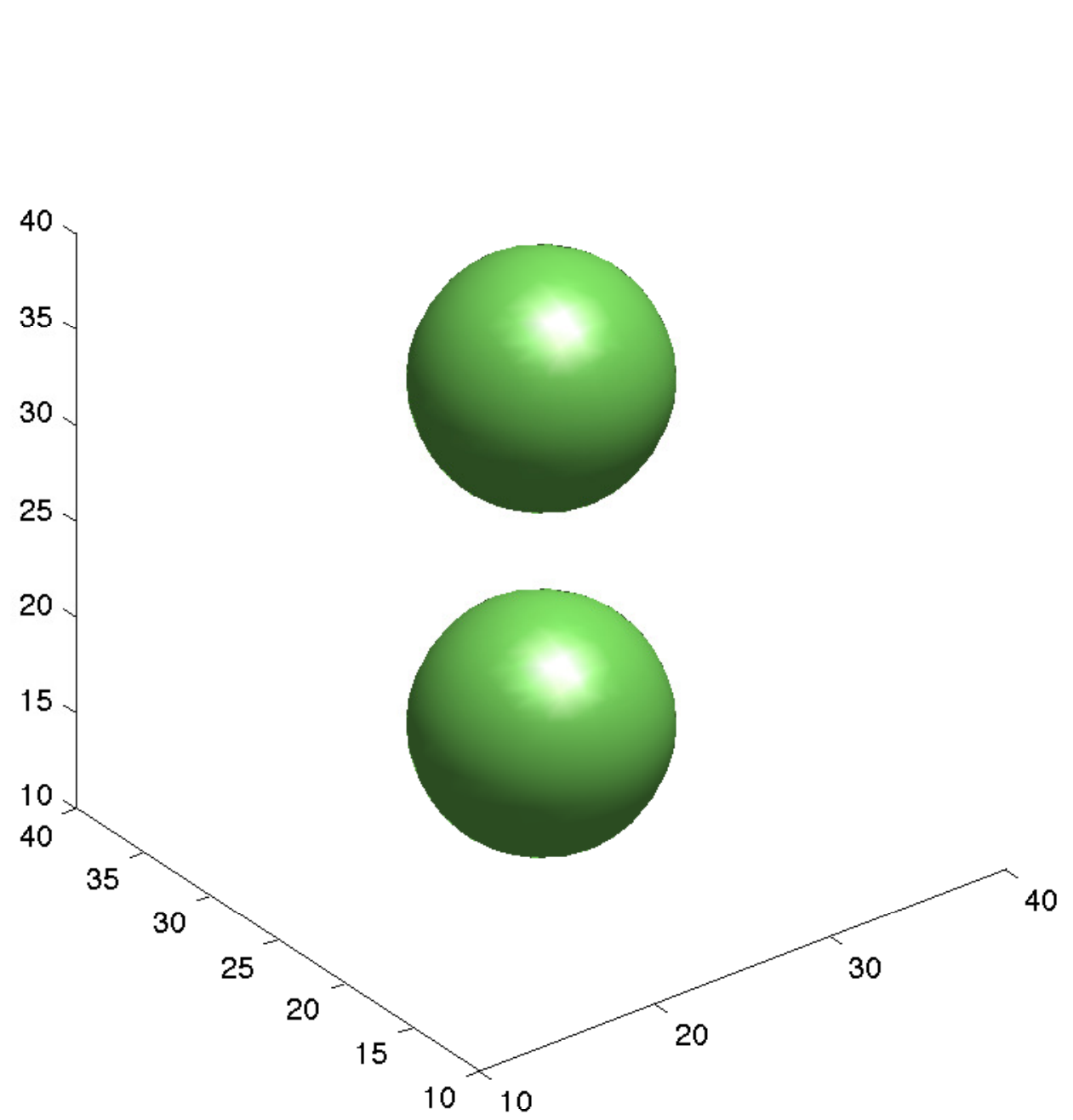}
\includegraphics[width=0.24\textwidth]{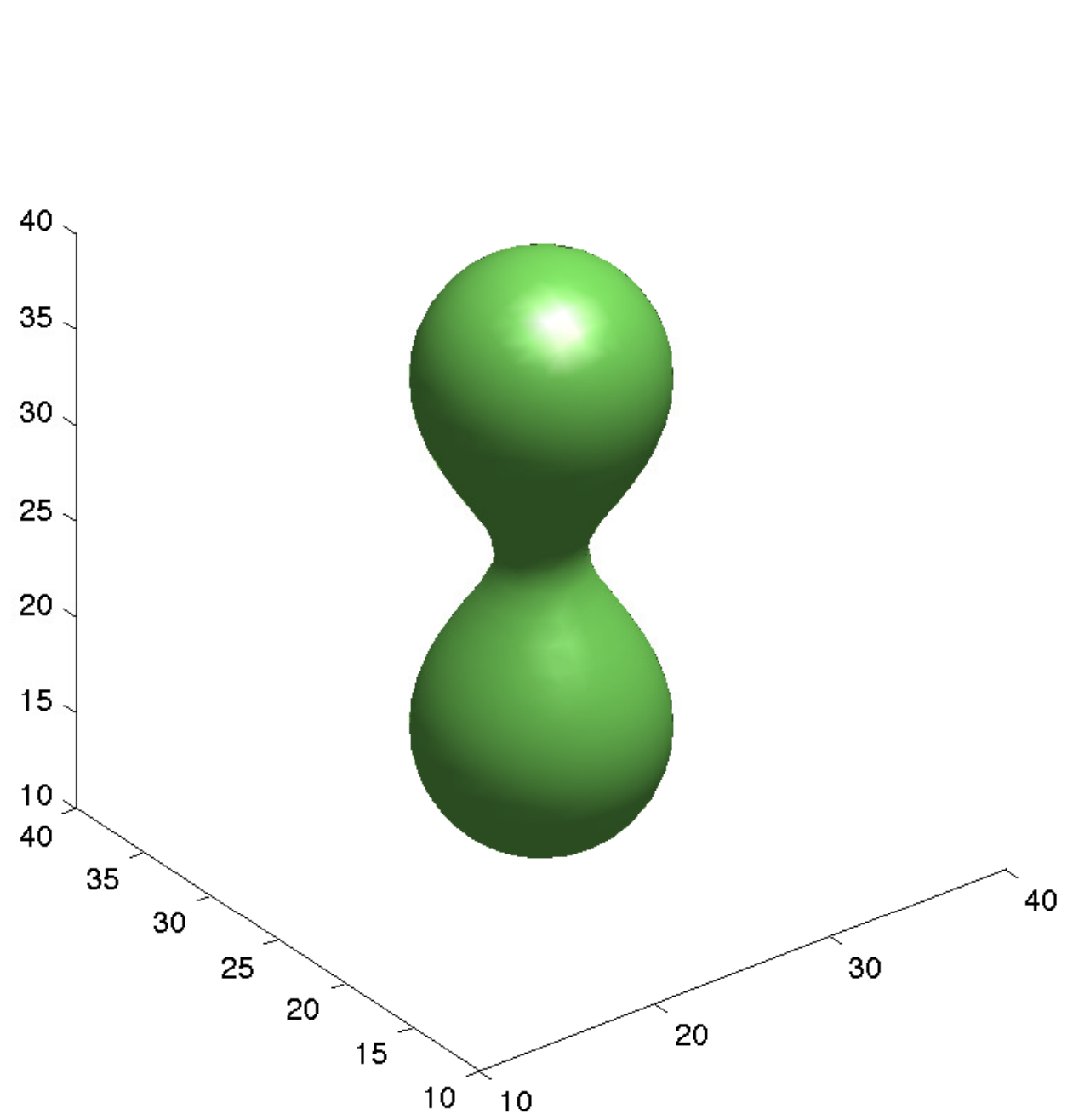}
\includegraphics[width=0.24\textwidth]{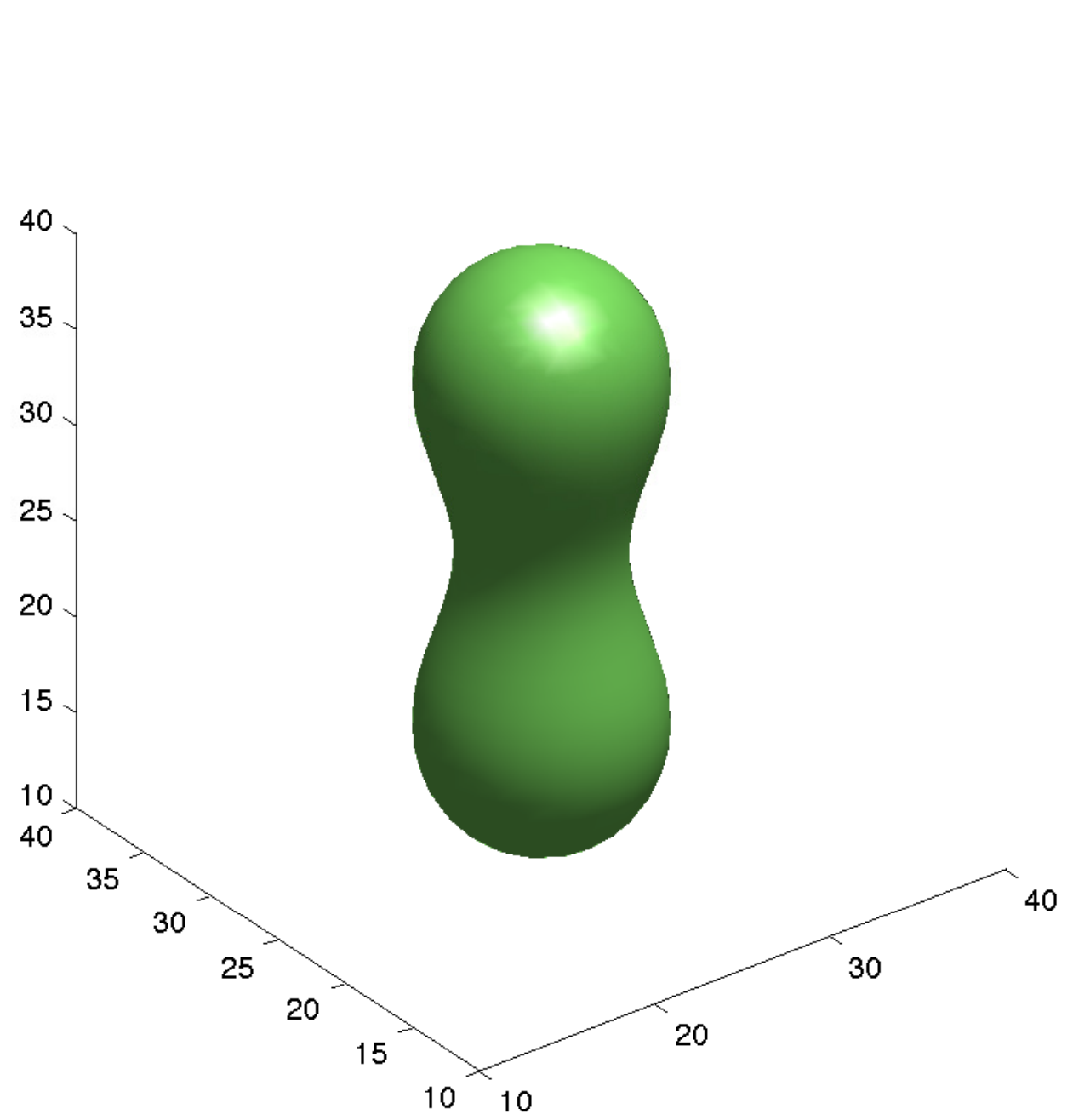}
\includegraphics[width=0.24\textwidth]{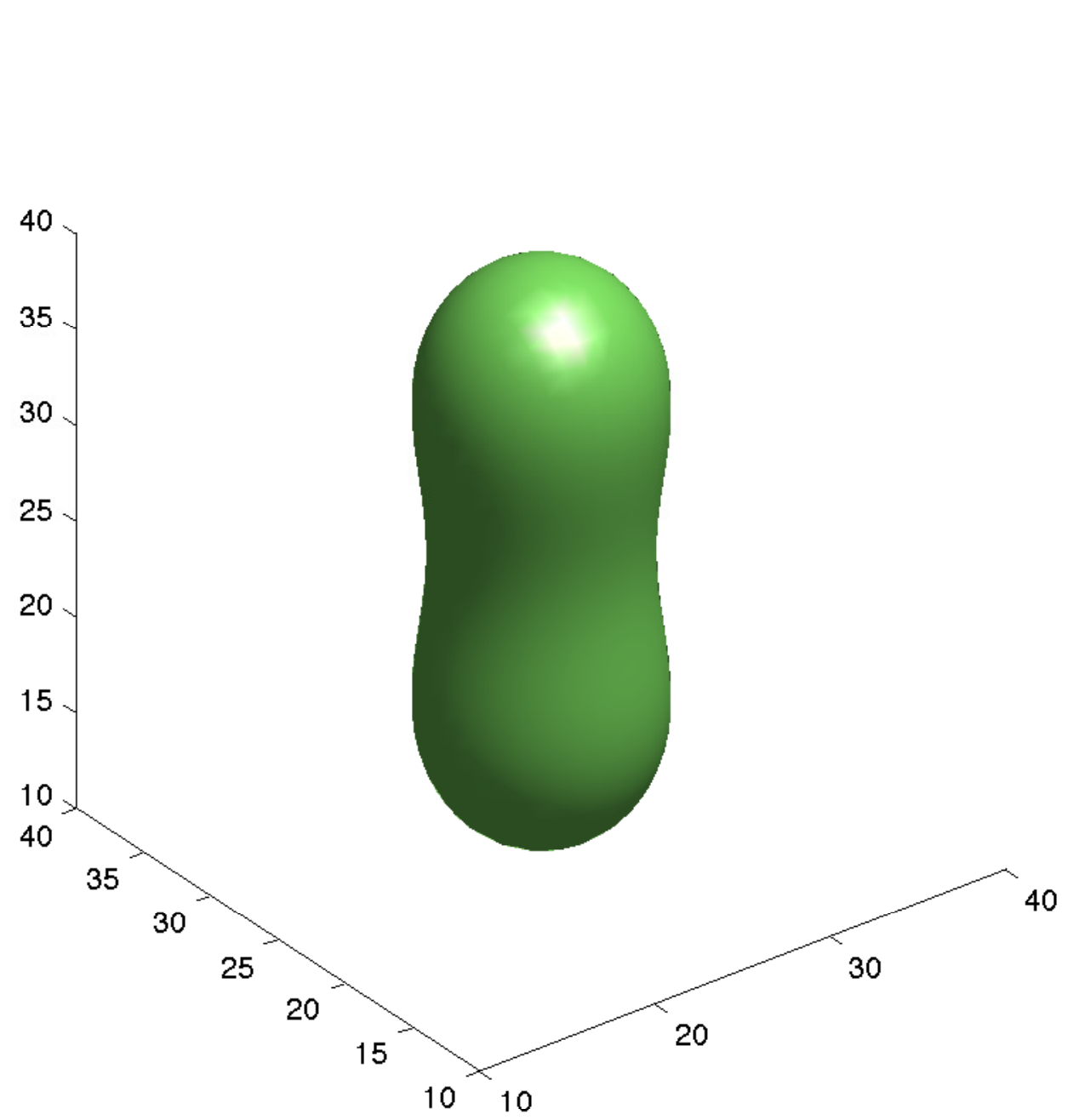}
\caption{\footnotesize Evolution of spheres under $L^2$ gradient flow for Bellettini's approximation of Willmore energy, with a bulk energy term encouraging expansion added.
The spheres expand and touch, and unlike in the 2D experiment with disks shown in Figure \ref{fig:bellettini_disks} or the 3D experiment with cylinders shown in Figure \ref{fig:cylinders}, the topological change takes place: the spheres merge, and the surface becomes instantaneously regular.}
\label{fig:bellettini_spheres}
\end{center}
\end{figure}


\section{Modified diffuse-interface Willmore flow}

In the following, we consider the modified diffuse approximation of
the Willmore energy \eqref{eq:def-mod-W} and the corresponding
$L^2$-gradient flow.

This modification by an additional `penalty' term offers some extra
flexibility compared to alternative energies such as
\eqref{eq:bell}. As long as solutions are close to the optimal-profile
construction the correction is negligible. One could therefore `switch
off' the additional term as long as $\mathcal{A}_\eps$ is small, or
only use the extra forcing in regions where $\mathcal{A}_\eps$ is
large. This might help to reduce computational costs, as the remaining
standard term is much easier to deal with. If solutions start to
deviate from the optimal profile, only the property that the extra
term blows up could be used. One therefore might need much less
accuracy in computing this term and might be able to choose different
numerical relaxation or discretization parameters than for the term
$\W_\eps$ in the total energy $\F_\eps$. Here however we only aim at a
proof of concept and do not exploit such possibilities or quantify
this hypothesis.

\subsection{Modified Willmore energy}

In order obtain a simpler variational derivative than we would obtain
from \eqref{eq:energyAdd}, we assume $|\eps \nabla u| \approx
\sqrt{2W(u)}$ and introduce an additional energy 
\begin{equation}
  \label{eq:energyAdd2}
  \tilde{\mathcal{A}}_\eps(u) :=
  \frac1{2\eps^{1+\alpha}}\int_\Omega\bigg(\underbrace{
      -\eps\Delta u + \eps^{-1}W'(u)}_{=:w} + \underbrace{
      \sqrt{2W(u)}\nabla \cdot \frac{\nabla u}{|\nabla
      u|}}_{=:v}\bigg)^2 \;\text{d}x,
\end{equation}
where $\alpha \le 1$. Whereas the analysis in Section \ref{sec:diff_new} does not cover this case, our numerical simulations show that this choice works equally well. In the following we therefore will use the diffuse Willmore energy
\begin{equation*}
  \tilde{\mathcal{F}}_\eps(u) := \mathcal{W}_\eps(u) + \tilde{\mathcal{A}}_\eps(u)
\end{equation*}
and will analyze numerically this functional and the corresponding $L^2$-gradient flow. We expect that a Gamma-convergence result as in Theorem \ref{thm:conv} also holds for $
\tilde{\mathcal{F}}_\eps$, but  the analysis in this case is much more difficult and will be subject to future investigations.

\subsection{Computation of the additional energy}

We compute the additional energy 
\begin{equation*}
  a(t) := \tilde{\mathcal{A}}_\eps(u_h(\cdot , t))
\end{equation*}
with $\alpha = 0$ during a simulation similar to the one
in Fig. \ref{fig1}. The only difference is that we have used 4th order
finite elements. Furthermore, we have regularized
\begin{equation*}
  \frac1{|\nabla u|} \leadsto \frac1{\sqrt{|\nabla u|^2 +
      \delta}}
\end{equation*}
with $\delta = 0.1$ in order not to divide by zero. Again we use a
uniform grid with $65^2=4225$ vertices leading to $66049$ degrees of
freedom for each unknown. In Fig. \ref{fig4} (left), one can see the
energy decrease of $e(t) = \mathcal{W}_\eps(u_h(\cdot, t))$ during
time with values very close to zero for large times. Fig. \ref{fig4}
(right) displays the expected blow up of the additional energy $a(t) =
  \tilde{\mathcal{A}}_\eps(u_h(\cdot, t))$ when the interfaces begin
  to ``feel'' each other. This behavior serves as a motivation to
  study the modified flow of the energy $\mathcal{W}_\eps +
  \tilde{\mathcal{A}}_\eps$ in the following.

\begin{figure}[here]
\begin{center}
\includegraphics*[width=0.49\textwidth]{./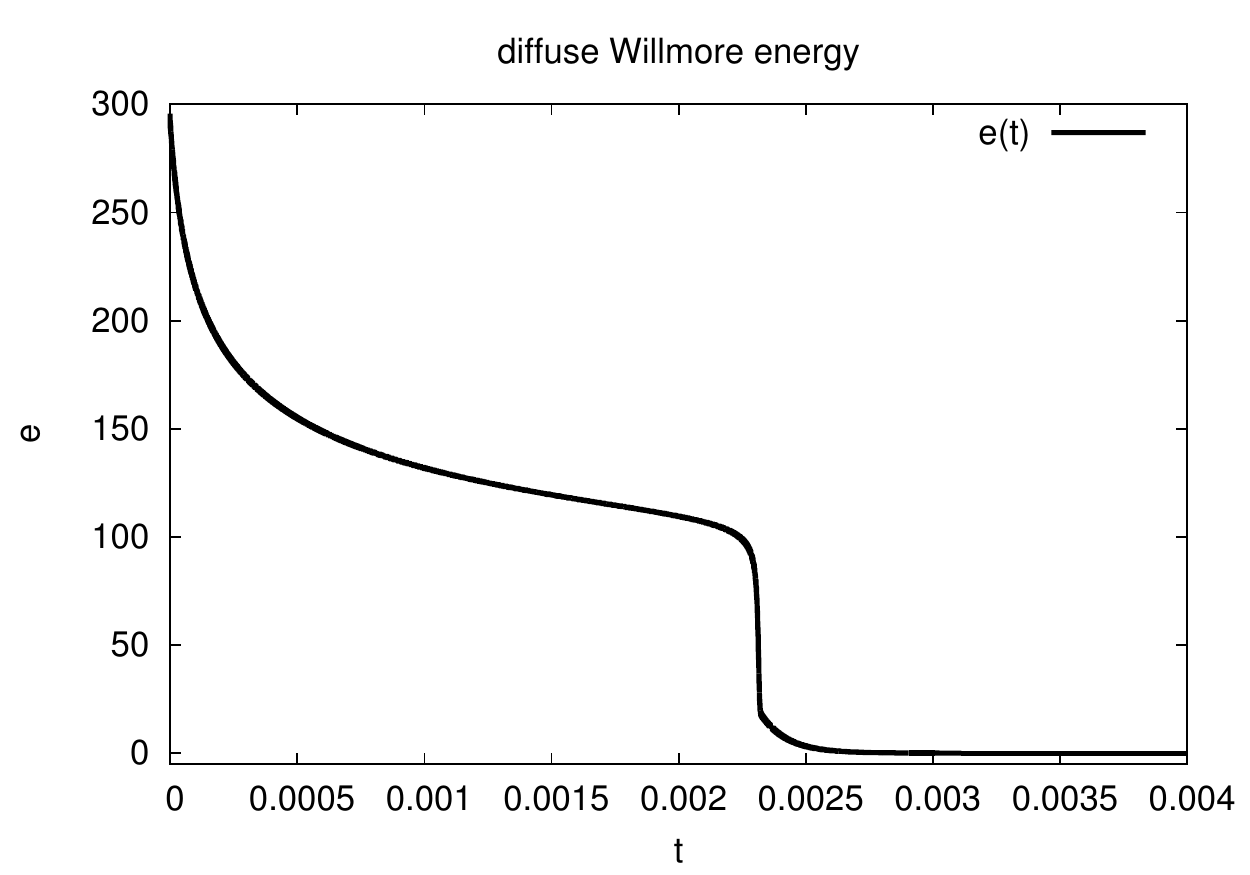}
\includegraphics*[width=0.49\textwidth]{./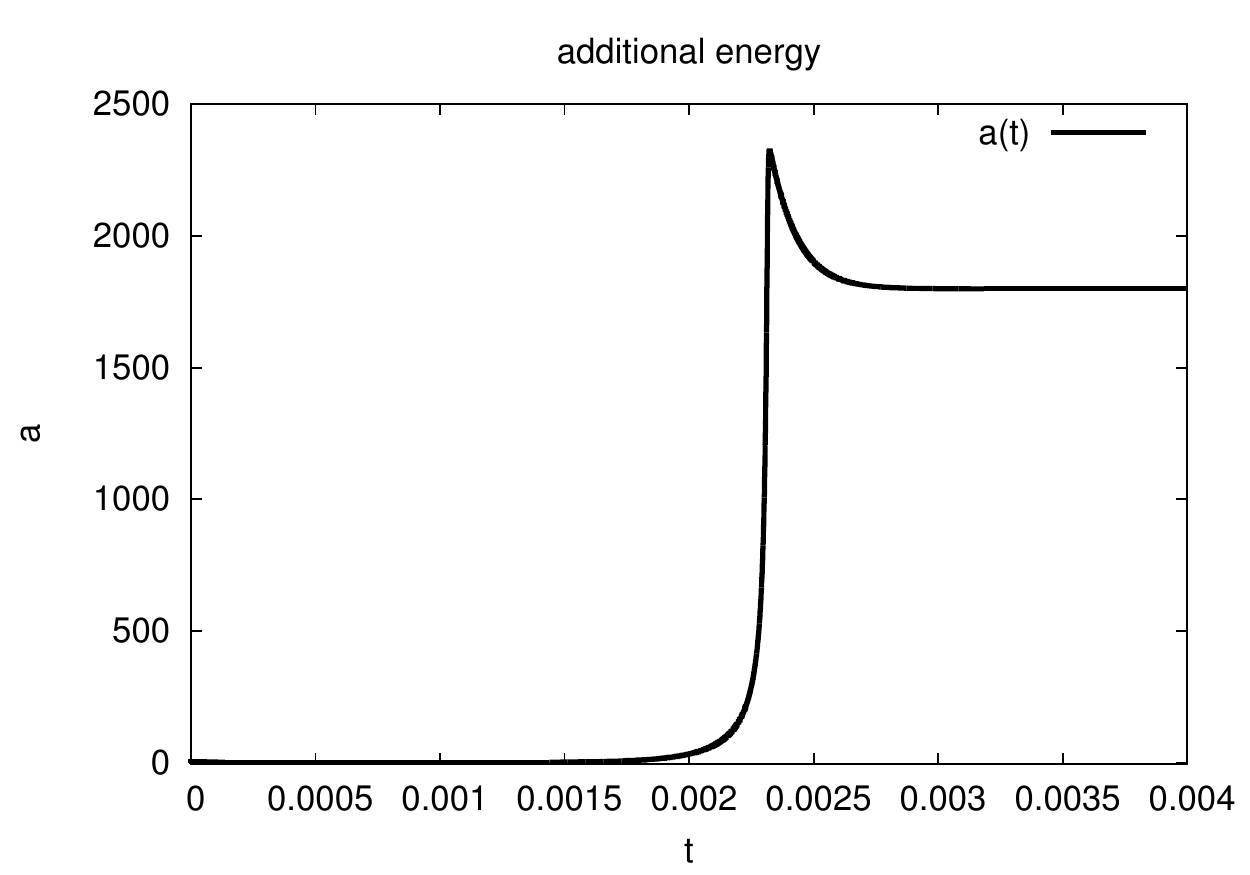}
\caption{\footnotesize \label{fig4} Evolution of ``standard'' diffuse-interface
  Willmore flow \eqref{eq:Wf-d}: Willmore energy $e(t) =
  \mathcal{W}_\eps(u_h(\cdot, t))$ (left), additional energy $a(t) =
  \tilde{\mathcal{A}}_\eps(u_h(\cdot, t))$ (right) versus time $t$.}
\end{center}
\end{figure}

\subsection{Modified flow}

Here, we consider the $L^2$-gradient flow 
\begin{equation}
  \label{eq:modflow}
  \eps\pd_t u = - \frac{\delta(\mathcal{W}_\eps +
    \tilde{\mathcal{A}}_\eps)}{\delta u}
\end{equation}
of the energy $\mathcal{W}_\eps(u) + \tilde{\mathcal{A}}_\eps(u)$. We
introduce
\begin{equation*}
  v :=  \sqrt{2W(u)}\nabla \cdot \frac{\nabla u}{|\nabla u|} =
  \nabla \cdot \left(\sqrt{2W(u)}\frac{\nabla u}{|\nabla u|}\right) -
  \big(\sqrt{2W(u)}\big)'|\nabla u|
\end{equation*}
and obtain the variational derivative
\begin{align}
  \label{eq:}
  \frac{\delta\tilde{\mathcal{A}}_\eps}{\delta u}
  &= \frac1{\eps^{1+\alpha}} \Big(-\eps \Delta (v + w) +
  \eps^{-1}W''(u)(v + w) + \frac{\big(\sqrt{2W(u)}\big)'}{\sqrt{2W(u)}}v(v + w) \\
  & \quad + \nabla \cdot (B(u, \nabla u)(\nabla v + \nabla w))
  \Big)\nonumber,
\end{align}
where
\begin{equation*}
  B(u, \nabla u) := \frac{\sqrt{2W(u)}}{|\nabla u|}\left(I - \frac{\nabla
    u}{|\nabla u|}\otimes\frac{\nabla u}{|\nabla u|}\right).
\end{equation*}
Thereby, $I = (\delta_{ij})_{ij}$ denotes the unit matrix.

 In order to numerically treat the modified flow \eqref{eq:modflow}, we
split the time interval $[0,T]$ by discrete time instants $0 = t_0 < t_1 <
\dots < t_{M} = T$, from which one gets the time steps $\Delta t_m := t_{m
  + 1} - t_m$, $m = 0, 1, \dots, M-1$. Moreover, we apply the
following operator splitting type Ansatz for time-discrete functions
$u^{(m)}$, $v^{(m)}$, $w^{(m)}$ at time instant $t_m$: 
\begin{enumerate}
\item compute $v^{(m)}$ via
  \begin{equation}
    \label{eq:modflow_v}
    v^{(m)} = \nabla \cdot \left(\frac{\sqrt{2W(u^{(m)})}}{|\nabla
        u^{(m)}|}\nabla u^{(m)}\right) - \left(\sqrt{2W(u^{(m)})}\right)'|\nabla u^{(m)}|
  \end{equation}
\item solve for $u^{(m+1)}$ and $w^{(m+1)}$ in 
  \begin{align}
    \label{eq:modflow_w}
    \eps\frac{u^{(m+1)} - u^{(m)}}{\Delta t_m} 
    &= \Delta w^{(m+1)} - \eps^{-2} W''(u^{(m)}) w^{(m+1)}\\
    &\quad - \frac1{\eps^{1+\alpha}} \Big(-\eps \Delta (v^{(m)} + w^{(m+1)})\nonumber\\
    &\quad  +
    \eps^{-1}W''(u^{(m)})(v^{(m)} + w^{(m+1)})\nonumber\\
    &\quad+ \frac{\sqrt{2W(u^{(m)})}'}{\sqrt{2W(u^{(m)})}}v^{(m)}(v^{(m)} +
    w^{(m+1)}) \nonumber\\ 
    & \quad + \nabla \cdot (B(u^{(m)}, \nabla u^{(m)})(\nabla v^{(m)}
    + \nabla w^{(m+1)})) \Big)\nonumber\\    
    \label{eq:modflow_u}
    w^{(m+1)} &= -\eps \Delta u^{(m+1)} +
    \eps^{-1}W''(u^{(m)})u^{(m+1)}\\
    & \quad + \eps^{-1}W'(u^{(m)}) - \eps^{-1}W''(u^{(m)})u^{(m)}.\nonumber
  \end{align}
\end{enumerate}
Thereby, for $\delta, \delta_W > 0$, we regularize
\begin{align*}
  \frac1{|\nabla u^{(m)}|} &\leadsto \frac1{\sqrt{|\nabla u^{(m)}|^2 +
      \delta}},\\
  \frac{1}{\sqrt{2W(u^{(m)})}} &\leadsto
  \frac{1}{\sqrt{2W(u^{(m)})}+\delta_W}
\end{align*}
for all terms of these forms appearing in
\eqref{eq:modflow_v}--\eqref{eq:modflow_u}. In addition to the
previous parameters, we have used $\delta_W = 0.01$. 

As a first test, we observe that the modified flow yields a reasonable
approximation of Willmore flow in the case of a growing circle. In
Fig. \ref{fig:circle}, we compare numerical results with the analytic
expression for a circle growing according to Willmore flow. Thereby,
we use an initial radius $R(0)=0.1$ and plot the radius $R(t)$ versus
time.

\begin{figure}[here]
\begin{center}
\includegraphics*[width=0.49\textwidth]{./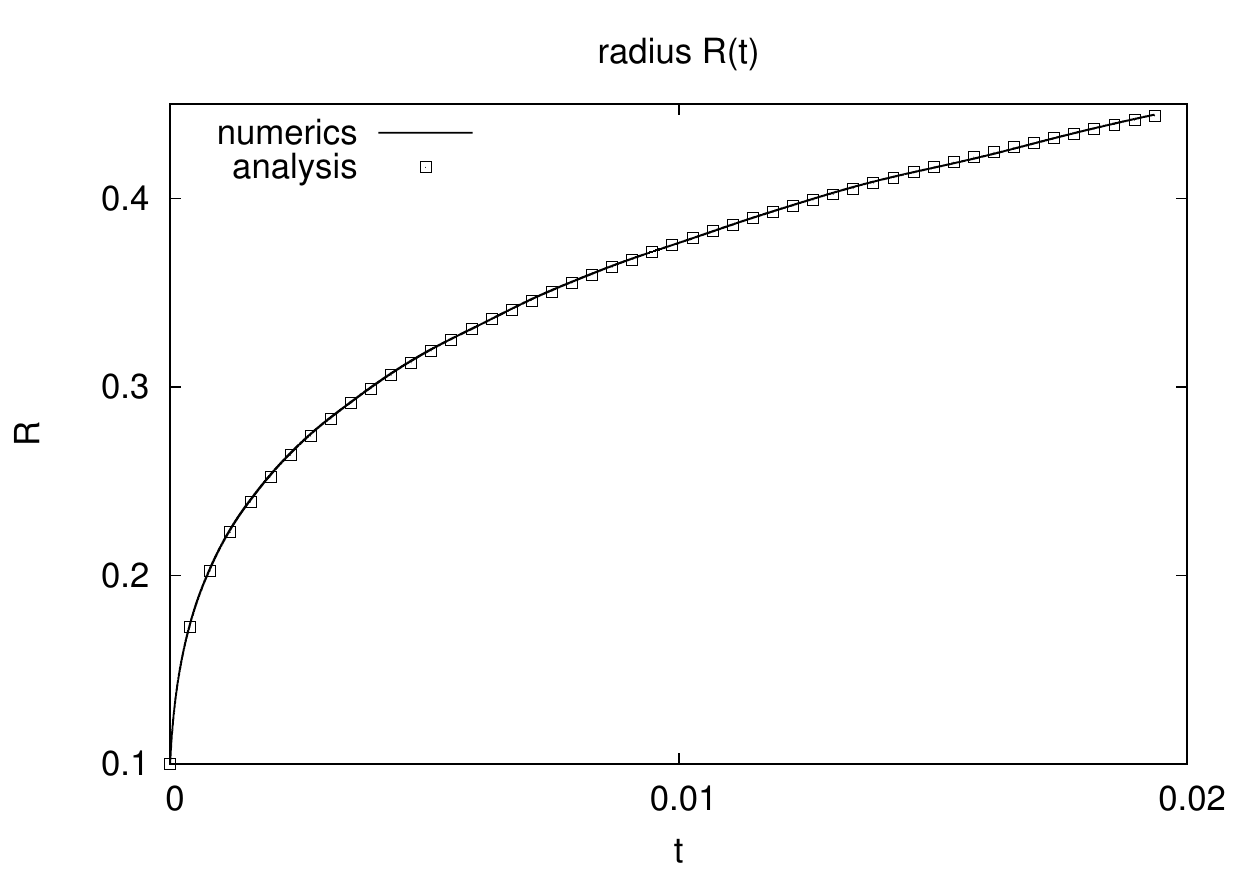}
\caption{\footnotesize \label{fig:circle} Evolution of modified flow
  \eqref{eq:modflow}: Radius of growing circle versus time: Analytic
  expression and results of modified diffuse-interface
  flow. \eqref{eq:modflow}}
\end{center}
\end{figure}

In Fig. \ref{fig:mod9}, we see numerical results for
this flow showing the phase-field variable $u$ for a symmetric initial
condition with nine circles with equal radii $0.1$ as in
Fig. \ref{fig1}. The final picture shows the nearly stationary
solution. Fig. \ref{fig:mod9energy} shows the energy decrease for this
example. 

\begin{figure}[here]
\begin{center}
\includegraphics*[width=0.24\textwidth]{./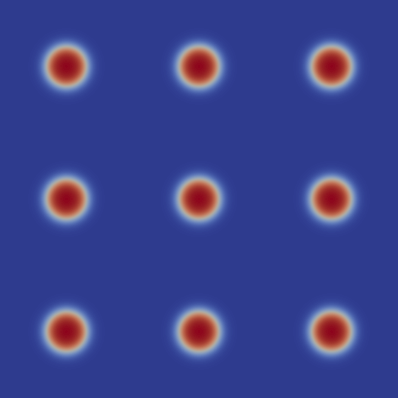}
\includegraphics*[width=0.24\textwidth]{./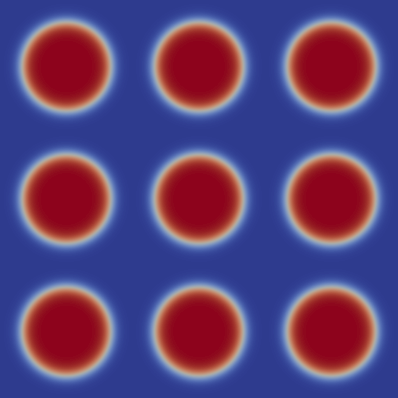}
\includegraphics*[width=0.24\textwidth]{./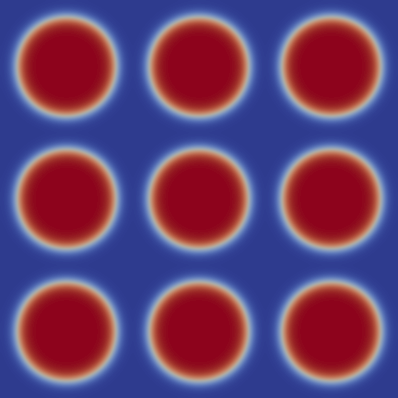}
\includegraphics*[width=0.24\textwidth]{./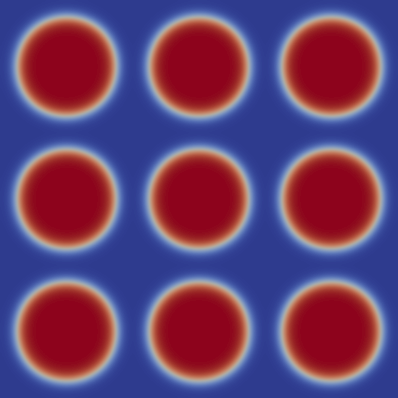}
\caption{\footnotesize \label{fig:mod9} Evolution of modified flow
  \eqref{eq:modflow}: Discrete phase-field $u_h$ for different times
  $t=0$, $t\approx0.0012$, \mbox{$t\approx0.0024$,} $t\approx0.0045$.}
\end{center}
\end{figure}

\begin{figure}[here]
\begin{center}
\includegraphics*[width=0.5\textwidth]{./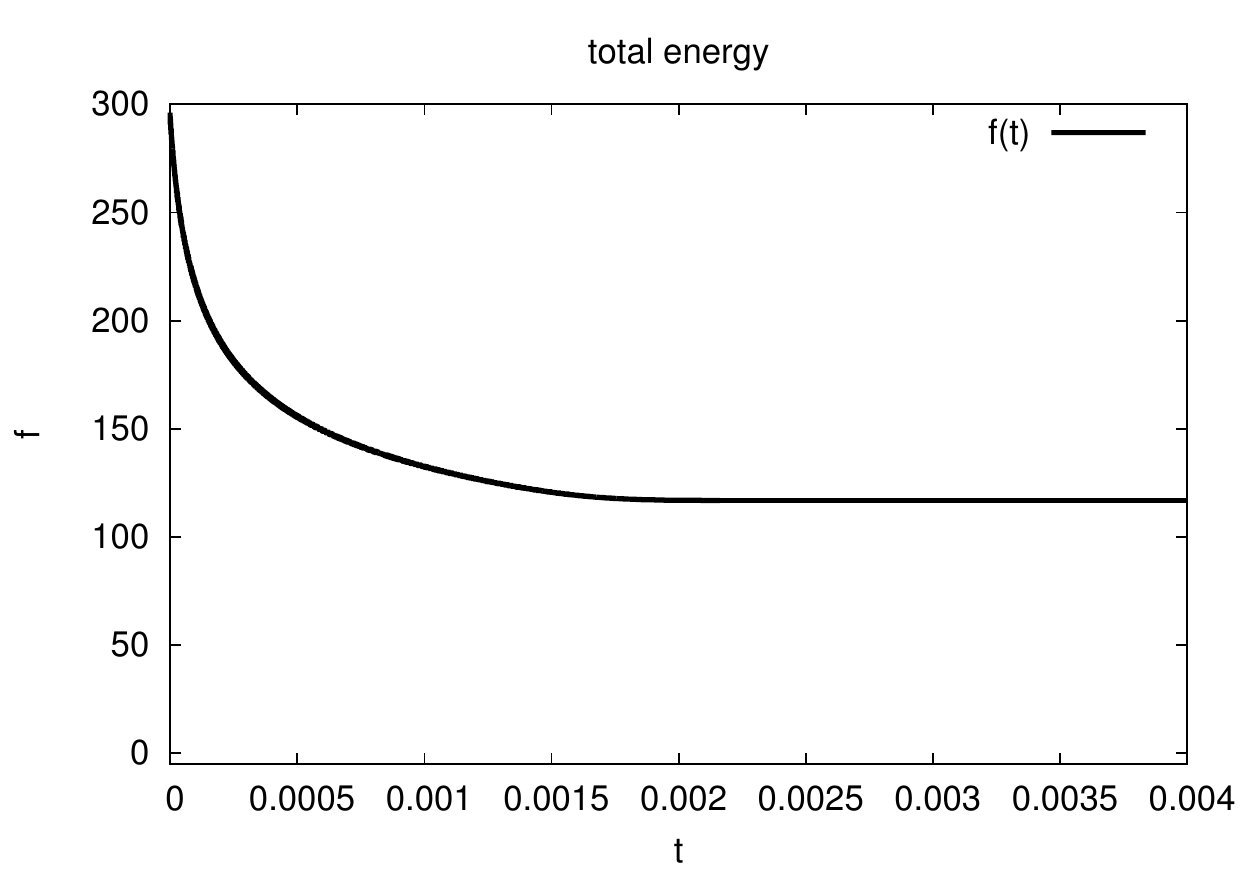}
\caption{\footnotesize \label{fig:mod9energy} Evolution of modified flow
  \eqref{eq:modflow}: Total energy $f(t) =
  \mathcal{W}_\eps(u_h(\cdot, t)) +
  \tilde{\mathcal{A}}_\eps(u_h(\cdot, t))$ versus time $t$.}
\end{center}
\end{figure}

In Fig. \ref{fig:modflowNonsymmPer} the nonsymmetric initial condition
from Fig. \ref{fig2} has been used. Again, the nearly
stationary discrete solution at time $t \approx 0.016$ shows that self
intersections are prohibited for this modified flow.

\begin{figure}[here]
\begin{center}
\includegraphics*[width=0.24\textwidth]{./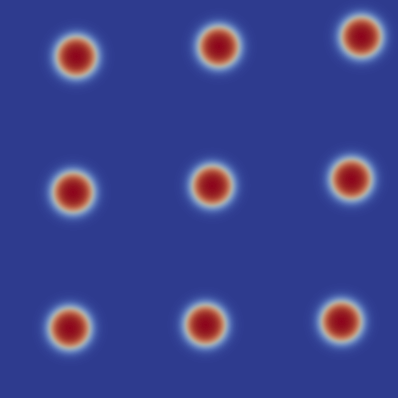}
\includegraphics*[width=0.24\textwidth]{./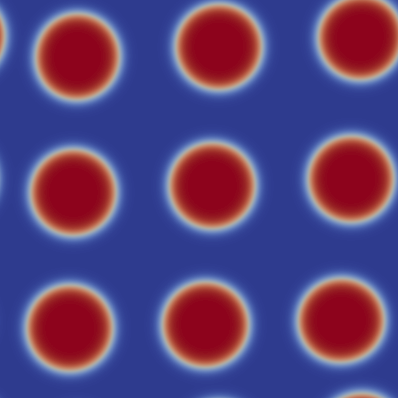}
\includegraphics*[width=0.24\textwidth]{./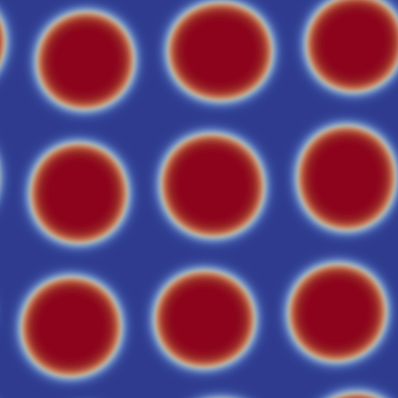}
\includegraphics*[width=0.24\textwidth]{./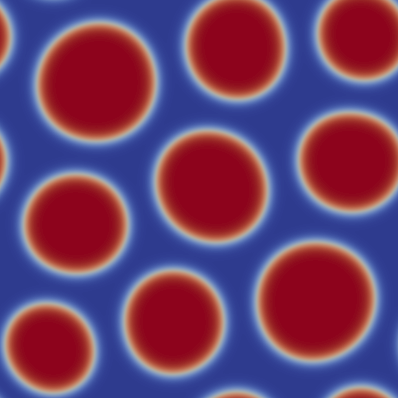}
\caption{\footnotesize \label{fig:modflowNonsymmPer} Evolution of modified flow
  \eqref{eq:modflow}: Discrete phase-field $u_h$ for different times
  $t=0$, $t\approx0.0010$, \mbox{$t\approx0.0030$,} $t\approx0.1046$.}
\end{center}
\end{figure}

In Fig. \ref{fig:mod2} the initial condition with two circles
is not symmetric as in Fig. \ref{fig:wil2}. The circles grow until the
interfaces ``feel'' each other. The modified energy prevents the level
sets from forming self intersections. The contour plots in
Fig. \ref{fig:mod2} are taken at similar times as in
Fig. \ref{fig:wil2}.

\begin{figure}
\begin{center}
\includegraphics*[width=0.24\textwidth]{./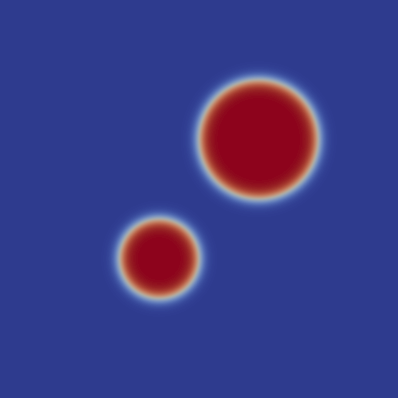}
\includegraphics*[width=0.24\textwidth]{./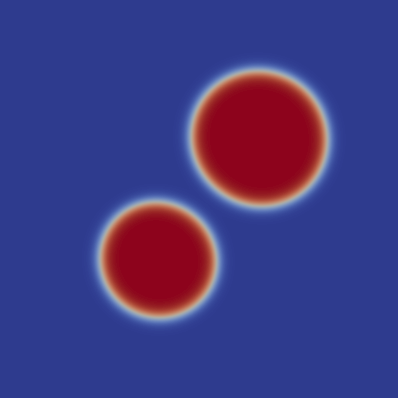}
\includegraphics*[width=0.24\textwidth]{./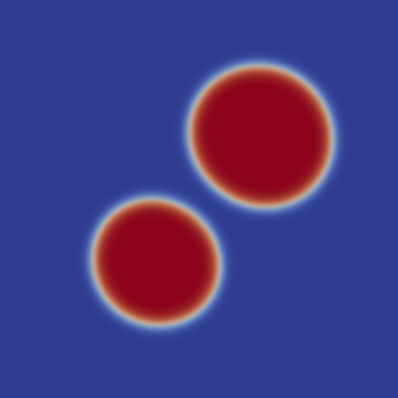}
\includegraphics*[width=0.24\textwidth]{./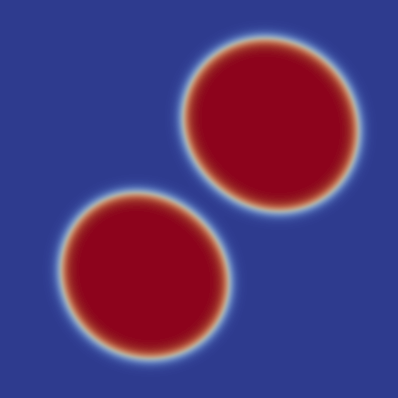}
\caption{\footnotesize \label{fig:mod2} Evolution of modified flow
  \eqref{eq:modflow}: Discrete phase-field $u_h$ for different times
  $t=0$, $t\approx0.003$, \mbox{$t\approx0.0049$,} $t\approx0.0199$.}
\end{center}
\end{figure}

In the gradient flow simulations for Bellettinis energy and the modified energy $\mathcal{F}_\eps$ we have often observed that circles upon collision keep touching and evolve to an ellipse type shape. We expect that such shapes represent suitable elastica. In a final example, we therefore compare (nearly) stationary states in our numerical simulations with graphs of minimal elastic energy \cite{LiJe07} that present possible optimal configurations. In \cite{LiJe07},
Linn{\'e}r and Jerome prove the existence of a unique graph of
minimal elastic energy among $W^{2,2}$-regular graphs over $[0,1]$ that have finite length, that start with horizontal slope in $(x,y)=(0,0)$ and reach the line $\{x=1\}$ with vertical slope. Moreover, Linn{\'e}r and Jerome provide an explicit formula for this minimizer that can be compared with shapes that we obtain as stationary configurations in our simulations. With this aim we have chosen the rectangular domain $\Omega =
(-1.1,1.1) \times (-2.2,2.2)$ with periodic boundary conditions and an ellipse as initial condition, such
that we expect the level set $\Gamma_h := \{u_h = \frac{1}{2}\}$ to
stop growing in the $x$-direction at $x \approx \pm 1$. In Fig. \ref{fig:LiJe07} we see
level sets $\{u_h = \frac{1}{2}\}$ at times $t=0$ and $t \approx
2.2462$, where the configurations have become nearly stationary. By symmetry, at the touching point with the vertical boundary of $\Omega$ the level line $\Gamma_h$ has vertical slope and its lower right quarter therefore can be compared to appropriately shifted versions of the analytic minimizer from \cite{LiJe07}. First we consider a shift such that the line $\{x=1\}$ is reached at $(x,y)=(1,0)$, see the curve $\Gamma_1$ in Fig. \ref{fig:LiJe07} (left). Second we consider a shift such that the analytic minimizer and $\Gamma_h$ (at the final time) agree at the bottom point, see the curve $\Gamma_2$ in Fig. \ref{fig:LiJe07} (right). We observe a pretty good agreement between the final configuration of our simulations and the analytic minimizer, indicating that equilibrium configurations of our simulations in fact consist of unions of elastica.
\begin{figure}
\begin{center}
\includegraphics*[height=0.3\textheight]{./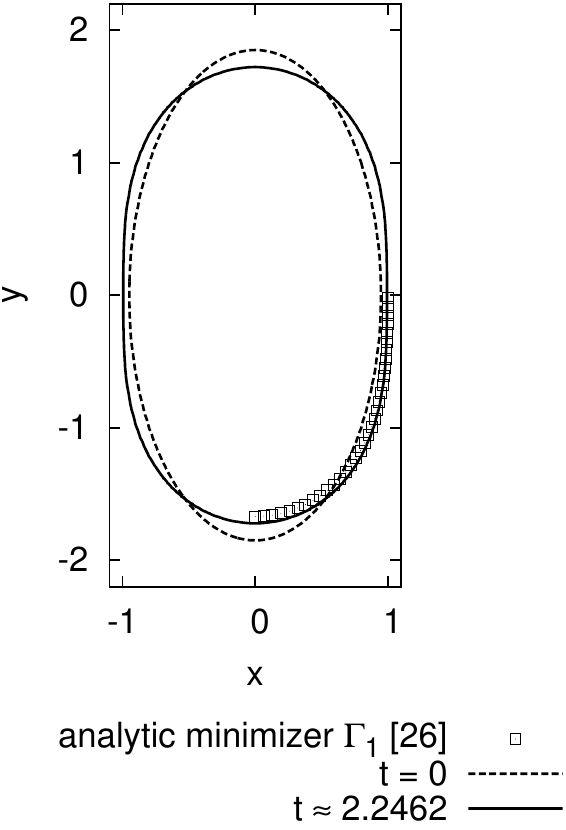}
\includegraphics*[height=0.3\textheight]{./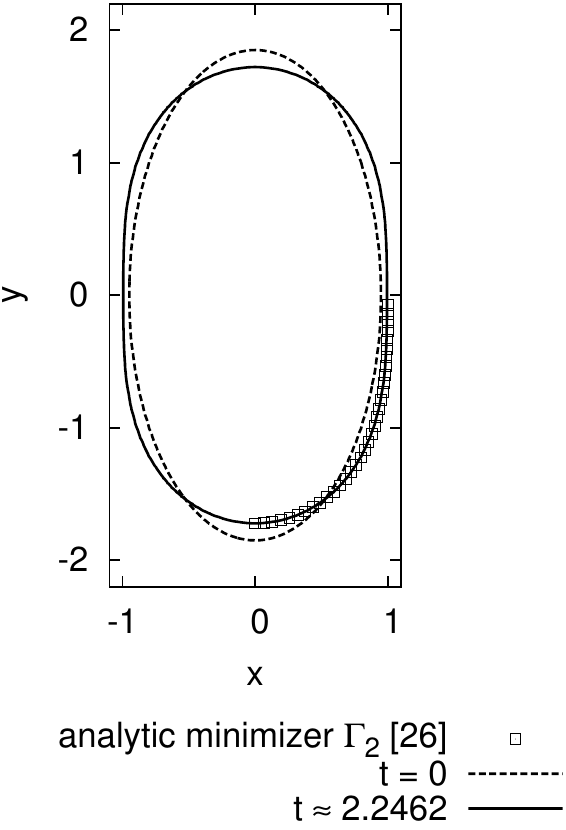}
\caption{\footnotesize \label{fig:LiJe07} Evolution of modified flow
  \eqref{eq:modflow}. Left: Level curve $\{u_h=1/2\}$ at times $t=0$
  and $t \approx 2.2462$ and plot of the shifted version $\Gamma_1$ of the analytic minimizer from
  \cite{LiJe07}. Right: level curve $\{u_h=1/2\}$ at times $t=0$ and $t \approx
  2.2462$ and shifted version $\Gamma_2$ of the analytic minimizer from \cite{LiJe07}.}
\end{center}
\end{figure}

%
\section{Discussion}
\label{sec:discussion}
We have analyzed the behavior of different diffuse approximations of
the Willmore flow in situations where diffuse interfaces
collide. Different scenarios emerge. In level set approximations the
respective phases typically merge (not investigated here, but see the results presented in \cite{DrRu04}) at the moment of collision. For the standard diffuse approximation on the
other hand we have demonstrated that (at least in two dimensions) transversal intersections of diffuse interfaces are preferred. As an alternative we have introduced two new diffuse approximations with again different behavior: after collision the phases keep touching, away from the touching points the interfaces evolve to elastica. 

Whereas all approximations converge to the same evolution as long as
phases remain well separated, the kind of approximation determines the
evolution past collision of interfaces. 
Here is a summary of these behaviors:

\begin{center}
\begin{tabular}{|c|c|c|}
\hline
Method & Disks in $\mathbb{R}^2$ & Spheres in $\mathbb{R}^3$\\
\hline
Standard & “Cross” formation & Merger\\
Level set & Merger & Merger\\
Bellettini & No merger & Merger\\
New approx. & No merger & Not investigated (we expect mergers)\\
\hline
\end{tabular}
\end{center}\medskip

\noindent Although there appears to be agreement in the case of spheres, it is reasonable to assume that the discrepancies in the two dimensional handling of topological changes would manifest themselves also in 3D with more general surfaces.

The expected behavior through these topological changes, and therefore the choice of the approximation, may depend on the specific application.
For example, in image processing applications such as inpainting (see e.g. \cite{Bertalmio2000}), it is often desired that colliding interfaces merge in 2D.
Indeed, in the important work \cite{ChanShenKang02}, the Willmore energy is utilized to drive interfaces towards collisions and mergers. 
In other applications, for instance in the segmentation of medical images (see e.g. \cite{AlexandrovSantosa2005}), it may be desired to prevent merging of colliding interfaces.

Our new approximation and that of Bellettini \cite{Bell97} agree in the handling of topological changes considered here for Willmore flow.
Indeed, for these Gamma-convergent approximations, we argue that the observed limit flow
describes the only reasonable evolution that is continuous with
respect to the $L^1$-topology and keeps decreasing the Willmore
energy. In contrast, typical `mergers' do not share this property. In
fact, let us assume that we have a sequence of Jordan curves
$(\gamma_k)_{k\in\N}$ that approximates the union of two touching
balls $$S\,=\, \{(x,y)\in\R^2 \,:\, (x+1)^2+y^2\,<\, 1\}\cup
\{(x,y)\in\R^2 \,:\, (x-1)^2+y^2\,<\, 1\}$$ in $L^1$-distance. Assume for
simplicity that the $\gamma_k$ are symmetric with respect to the
$x$-axis, that the upper half is given as a nonnegative graph, and that the convergence is in an $C^0$-sense outside a region $\{(x,y)\in\R^2\,:\, |x|<1-\delta\}$. We then can
modify the curves by replacing the part below the $x$-axis by a
circular arc that touches $\gamma_k$ in its intersection points with
the $x$-axis (see Fig. \ref{fig:gammak}).
\begin{figure}
\begin{center}
\includegraphics*[width=0.45\textwidth]{./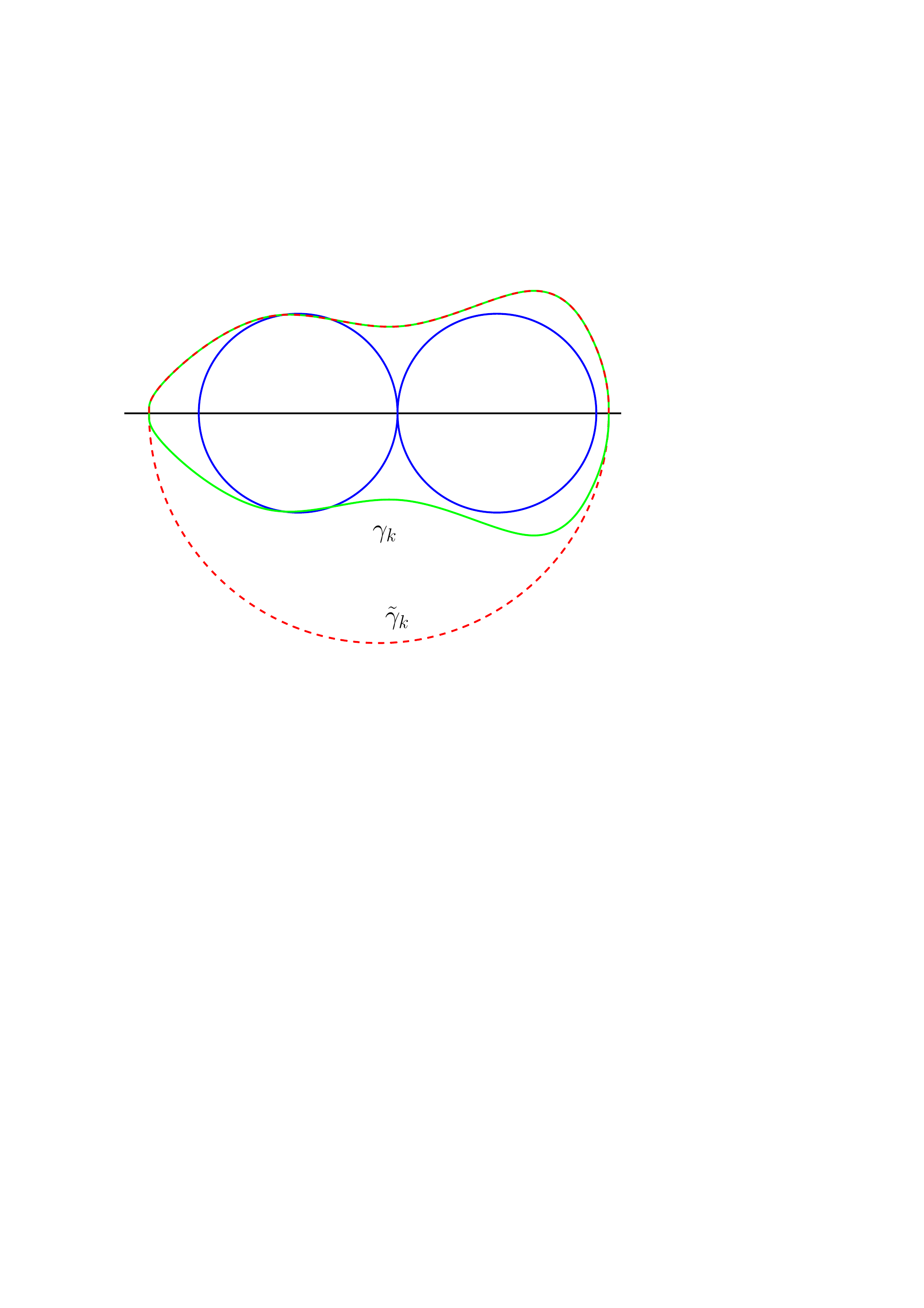}
\includegraphics*[width=0.45\textwidth]{./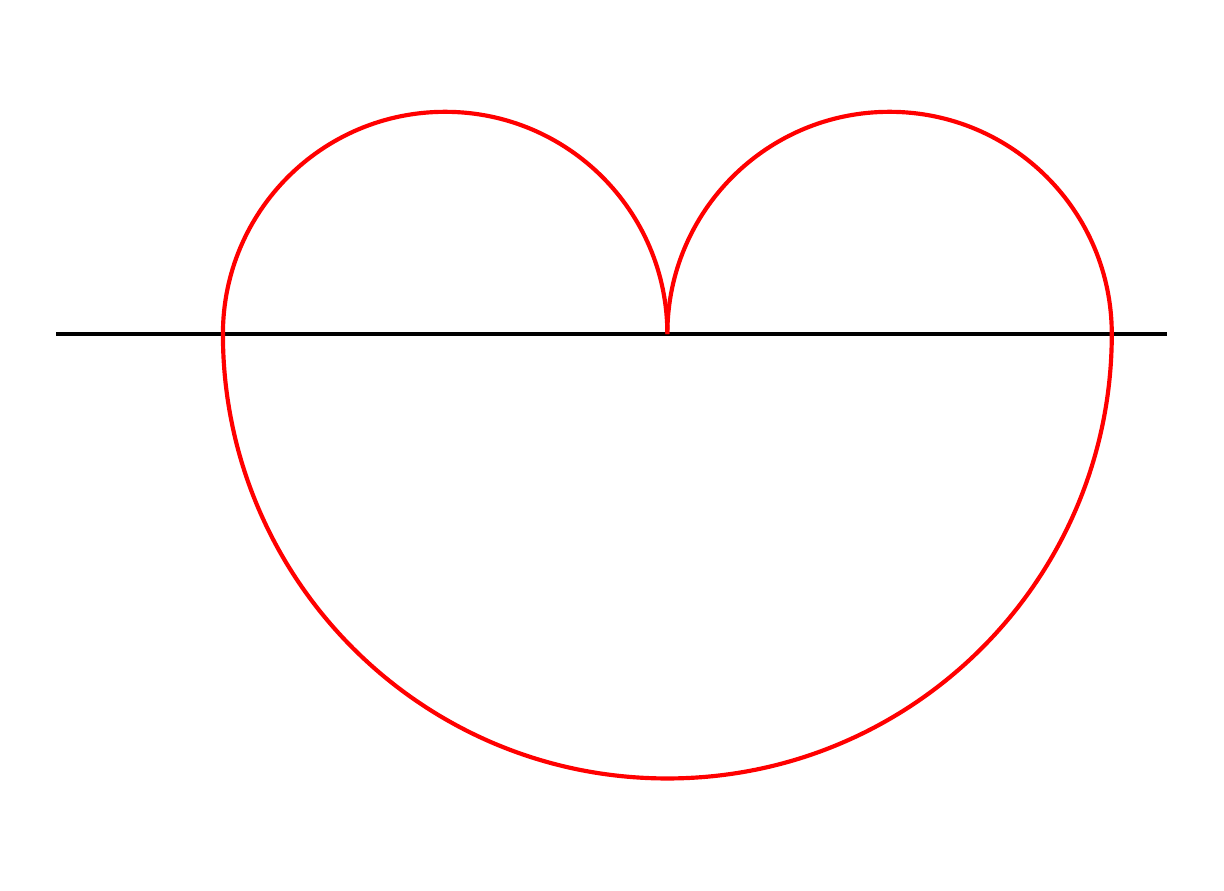}
\caption{\footnotesize \label{fig:gammak} Left: Curve $\gamma_k$ and
  modification $\tilde{\gamma}_k$. Right: Limit of the modified curves.}
\end{center}
\end{figure}
 By construction the modified curves $\tilde{\gamma}_k$ are $C^1$-Jordan curves that are piecewise $C^2$. As the radius of the attached half-circles approaches $2$ with $k\to\infty$ there exists a constant $C>0$ independent of $k$ such that
\begin{gather}
	\W(\tilde{\gamma}_k) \,\leq\, \W(\gamma_k) +C. \label{eq:no-mergers}
\end{gather}
On the other hand the sets enclosed by $\tilde{\gamma}_k$ converge in $L^1$-distance to $\big(S\cap \overline{\R^2_+}\big)\, \cup\,\big( B(0,2)\cap \R^2_-\big)$, where $\R^2_\pm$ denote the upper and lower half-plane, respectively. This however is a set with exactly one simple cusp point. By \cite[Theorem 6.4]{BeDP93} this implies that
\begin{gather*}
	\lim_{k\to\infty} \W(\tilde{\gamma}_k)\,=\, \infty.
\end{gather*}
From \eqref{eq:no-mergers} we deduce therefore that the elastica
energy of $\gamma_k$ blows up, too.
%
\bigskip

In 3D the behavior of diffuse Willmore flows in situations where diffuse interfaces collide is different from the two dimensional case: two spheres that are initially disjoint but forced to come in contact via a volumetric expansion term are likely to merge under gradient descent for the $L^1$ relaxation of Willmore energy.
Indeed, using e.g. catenoid “necks”, of the form
\begin{equation*}
y = a \cosh \left( \frac{x-b}{a} \right),
\end{equation*}
with $a$ and $b$ are appropriately chosen, we can “connect” two spheres at the moment of contact with an arbitrarily small neck that is tangent to the spheres after slightly shifting them if necessary, while decreasing the energy.
Note that the catenoid neck, regardless of its scale, has no elastica energy at all, since it happens to be a minimal surface and thus has vanishing mean curvature.
To be more precise, the foregoing discussion shows that it is easy to construct a continuous in $L^1$ map from the interval $[0,1]$ into sets in $\mathbb{R}^3$ that deforms the union of two spheres in contact to a $C^2$ surface topologically equivalent to the sphere, while decreasing the Willmore energy during the deformation. 

\medskip

{\bf Acknowledgement.} The authors would like to thank Luca Mugnai for
stimulating discussions.

\medskip


\end{document}